\newtheorem{Th}{Theorem}
\newtheorem{prop}{Proposition}
\newtheorem{obs}{Remark}
\newtheorem{lema}{Lemma}
\def\rr{\mathbb{R}}
\def\nn{\mathbb{N}}
\def\zz{\mathbb{Z}}
\def\ov{\overline}
\def\l{\lambda}
\def\n{\nabla}
\def\D{\Delta}
\def\q{\quad}
\def\ov{\overline}
\def\eps{\varepsilon}
\def\eps{\varepsilon}
\def\rez{\textrm{Res}}
\def\dist{\mathrm{dist}}
\def\nn{\mathbb{N}}
\def\zz{\mathbb{Z}}
\def\ov{\overline}
\def\eps{\varepsilon}
\def\dx{\ \mathrm{dx}}
\def\be{\begin{equation}}
\def\ee{\end{equation}}
\numberwithin{equation}{section} \numberwithin{Th}{section}
\numberwithin{cor}{section} \numberwithin{lema}{section}
\numberwithin{prop}{section} \numberwithin{obs}{section}
\numberwithin{Def}{section}
\title[Hardy inequalities for potentials with countable number of singularities]{Hardy inequalities for inverse square potentials with countable number of singularities}
\author[C. Cazacu]{Cristian Cazacu}
\address[C.  Cazacu]{$^1$Faculty of Mathematics and Computer Science \& The Research Institute of the University of Bucharest (ICUB),  University of Bucharest\\
	14 Academiei Street \\ 010014 Bucharest\\ Romania	\& 
	$^2$ Gheorghe Mihoc-Caius Iacob Institute of Mathematical 
		Statistics and Applied Mathematics of the Romanian Academy \\
		No.13 Calea 13 Septembrie, Sector 5 \\ 050711 Bucharest, Romania
	}
\email{cristian.cazacu@fmi.unibuc.ro}
\author[A. Marica]{Aurora Marica}
\address[A. Marica]{$^1$ Department of Mathematics and Computer Sciences, Faculty of Applied Sciences, University Politehnica of Bucharest\\ 313 Splaiul Independentei\\060042 Bucharest\\ Romania\\ 
	\& 
	$^2$ The Research Institute of the University of Bucharest (ICUB),  University of Bucharest, 90-92 Sos. Panduri, 5th District, Bucharest, Romania
}
\email{aurora.marica@upb.ro}
\begin{document}
\maketitle
\begin{abstract}
	
The Hardy Inequality (HI) for potentials with countably many singularities of the form $V=\sum_{k\in \zz}\frac{1}{|x-a_k|^2}$ is not a trivial issue. In principle, the more singular poles are, the less the Hardy constant is: it is well-known that in all the existing results about
the HI with finite number of singularities the best constants converge to 0
with the number $n$ of singularities going to infinity.  

In this note we provide an example of nontrivial HI  in right cylinders of fixed radius $R>0$ in $\rr^d$, for a potential $V$ defined above having the singularities $\{a_k\}_{k\in \zz}$ uniformly distributed on the axis of the cylinders. For this example we prove that an upper bound for the Hardy constant is $(d-2)^2/4$, the clasical Hardy constant in $\rr^d$ corresponding to one singular potential.  We also prove positive lower bounds of the Hardy constant which allow to deduce that the asymptotic behavior as $R\to 0$ of the Hardy constant coincides with $(d-2)^2/4$. The proof of the main result lies on using a nice identity due to Allegretto and
Huang \cite[Thms. 1.1, 2.1]{Allegretto} for particularly well chosen test functions.

\end{abstract}
{\bf Keywords:} Hardy inequality, singular potentials, optimal constants. asymptotic behavior. 

{\bf  MSC (2020):} 26D10, 35J75, 35B25, 46E35.

\section[Introduction]{Introduction}

The celebrated integral Hardy Inequality (abv. HI) asserts that (see, e.g. \cite{Davies, HardyNote, HardyLittlewoodPolya, Kufner, BV}) 
\begin{equation}\label{HardyIneq}
 \int\limits_\Omega |\nabla u|^2\dx\geq \frac{(d-2)^2}{4}\int\limits_\Omega\frac{u^2}{|x|^2}\dx, \qquad \forall u\in C_0^\infty(\Omega\setminus\{0\}),\end{equation}
 for any open and connected set $\Omega\subset\rr^d$, $d\geq 1$. Of course, HI \eqref{HardyIneq} is more interesting when the origin belongs to $\overline{\Omega}$, in which case the potential $1/|x|^2$ becomes unbounded.  Moreover, the constant $(d-2)^2/4$ in \eqref{HardyIneq} is optimal  when the origin belongs to $\Omega$.  Similarly, \eqref{HardyIneq} transfers to inverse-square potentials of the form $1/|x-a|^2$ with a singular pole $a\in \rr^d$  different from  the origin $x=0$.  
 
 Subsequently, various important works have analyzed HI with weighted multi-singular potentials, mainly of the form
 \begin{equation}\label{multi_potential}
 V_n:=\sum_{k=1}^{n}\frac{\mu_i}{|x-a_k|^2},
 \end{equation}
 where $\mu_k$ are constant real values and $a_k$ are given singular poles for any $k\in \{1, \ldots, n\}$. 
For a relevant bibliography we refer to 
  the papers \cite{BDE, Felli1, D, FFT, FT1, CazZua} and, more recently, to \cite{CP, CPT} and the references therein. The quoted authors obtained various lower bounds (which are the most delicate in general) and upper bounds for the best constant of the corresponding HI and applied them to the study of some critical PDEs governed by such multipolar potentials. 
  
 However, to the best of our knowledge, finding explicitly the Hardy constant is still an open issue for potentials of the form \eqref{multi_potential} for any particular domain $\Omega$, but lower and upper bounds are known. 

 In all of the above existing results about
 the HI with finite number of singularities, the best constants converge to 0
 when the number $n$ of singularities goes to infinity.  Indeed, for instance if one considers the normalized potential $V_n^1$ with $n$ singularities given by   
 $V_n^1:=\sum_{k=1}^{n}\frac{1}{|x-a_k|^2}$ and apply the optimal HI \eqref{HardyIneq} in the whole space $\rr^d$ we first obtain $$ \int_{\rr^d} |\nabla u|^2\dx \geq \frac{(d-2)^2}{4}\int_{\rr^d} \frac{u^2}{|x-a_k|^2}\dx, \quad \forall k.$$
 Summing up with respect to $k$, we reach to 
 $$ \qquad \int_{\rr^d} |\nabla u|^2\dx \geq \frac{(d-2)^2}{4n}\int_{\rr^d} V_n^1 u^2 \dx.$$ 
 Obviously, $\frac{(d-2)^2}{4n}$ tends to 0, as $n\rightarrow \infty$. 
  
This note contains one of the first results about HI for
a countable number of singularities. We are interested to obtain nontrivial estimates of the best constant in HI for a potential of the form  
\begin{equation}\label{eqq1}
V=\sum_{k\in \zz}\frac{1}{|x-a_k|^2}.
\end{equation}
 This is not a trivial issue since, by applying the above limit argument, we obtain a trivial lower bound in the HI.    

The main contribution of this note is to construct an example of a potential $V$ as in \eqref{eqq1} and a domain $\Omega$ for which the optimal Hardy constant is strictly positive.

To the best of our knowledge, the case of a countable number of
singularities has been very less investigated in the literature. The
only related result we have faced refers to the paper \cite{Felli1},
in which the authors showed that for any $\lambda< (d-2)^2/4$ and
$\{a_n\}_{n\in \nn}\subset \rr^d$ satisfying
$$\sum_{n=1}^{\infty} |a_n|^{-(d-2)}< \infty, \q \sum_{k=1}^{\infty}
|a_{n+k}-a_n|^{-(d-2)} \textrm{ is uniformly bounded  in } n, $$
$|a_n-a_m|\geq 1$ for all $n\neq m$, there exists $\delta >0$ such
that
\begin{equation}\label{bee}
\forall u\in H^{1}(\rr^d), \q  \int_{\rr^d} |\n u|^2 \dx - \lambda
\sum_{n=1}^{\infty} \int_{\rr^d} \frac{ \chi_{B_\delta(a_n)}
	u^2}{|x-a_n|^2}\dx\geq 0.
\end{equation}

Inequality \eqref{bee} represents a localization result for a potential with coutable number of singularities. However, we are interested to estimate globally the best Hardy constant. In addition, we will see later that  when  $d=3$, our sequence of singularities does not verify the above hypotheses on $\{a_n\}_{n\in \nn}$ and therefore  \eqref{bee} does not apply.

Our article is divided as follows. 
 In Section 2, we introduce a potential $V$ having the form \eqref{eqq1} for which we compute explicitly the corresponding series by using complex analysis tools. We consider $\Omega$ to be a right cylinder of a given radius $R$ and the poles $\{a_k\}_k$ to be uniformly localized on the axis of the cylinder.  We also introduce some preliminary notations and prove some auxiliary results.   In Section 3, we state the main results consisting in Theorems \ref{th1}, \ref{th2} in which we perform nontrivial upper and lower estimates for the best constant  $\lambda$ in the corresponding HI: $-\Delta \geq \lambda V$. We mainly show that the upper estimate of the best constant is the best constant for Hardy inequality on the whole space, $(d-2)^2/4$, while the lower estimate is strictly less than $(d-2)^2/4$, and tends to it when $R$ goes to 0. The proof of the main result lies on using a nice identity due to Allegretto and
Huang \cite[Thms. 1.1, 2.1]{Allegretto} for particularly well chosen test functions. 

\section{Preliminary notions and auxiliary results}

We are aimed to prove Hardy-type inequalities in a cylinder $\mathcal{C}_R$ of radius $R>0$, considering potentials with countable number of singularities equally distributed on the line $\gamma$ parametrized by $x_1=x_2=\ldots =x_d$ which represents the axis of the cylinder. The cylinder will be described mathematically in a convenient way a bit later after introducing some useful notations. 

For a given $h>0$ we consider the singular points $\{A_k\}_{k\in \zz}$ with the cartesian coordinates
$a_k=k \ov{h}$, where $\ov{h}=(h, \ldots, h)$ is a particular vector with all components equal to $h$.  This yields the inverse square potential
\begin{equation}\label{potential}
V(x)=\sum_{k\in \zz} \frac{1}{|x-k \ov{h}|^2}
\end{equation}
where $x=(x_1, \ldots, x_d)$ is a current point in $\rr^d\setminus \cup_{k\in \zz} \{A_k\}$.

Although the potential is given by a series, in fact it is explicitly computable. More precisely,

\begin{prop}\label{comp_potential}
	The potential $V$ has the explicit form
	$$V(x)= \frac{\pi}{\rho}\frac{e^{2\pi \rho}-e^{-2\pi \rho}}{e^{2\pi \rho}+e^{-2\pi \rho}-2 \cos (2\pi a)}\frac{1}{dh^2},$$
	where
	$$\rho:=\frac{\sqrt{\sum\limits_{j,k=1}^d (x_j-x_k)^2}}{dh \sqrt{2}},\quad  a:=\frac{\sum\limits_{j=1}^d x_j}{d h}.$$	
\end{prop}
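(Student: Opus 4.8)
The plan is to collapse the $d$-dimensional lattice sum onto a one-dimensional series and then evaluate that series by a classical residue computation. First I would expand the squared distances. Writing $S_1:=\sum_{j=1}^d x_j$ and $S_2:=\sum_{j=1}^d x_j^2$, a direct computation gives
\[
|x-k\ov h|^2=S_2-2kh\,S_1+dk^2h^2=dh^2\big((k-a)^2+\rho^2\big),
\]
where $a=S_1/(dh)$ and, after completing the square in $k$, the remaining constant $S_2/(dh^2)-S_1^2/(d^2h^2)$ is recognized as $\rho^2$ via the elementary identity $\sum_{j,k=1}^d(x_j-x_k)^2=2dS_2-2S_1^2$. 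Consequently
\[
V(x)=\frac{1}{dh^2}\sum_{k\in\zz}\frac{1}{(k-a)^2+\rho^2},
\]
and the problem is reduced to summing this last series, which converges absolutely whenever $\rho\neq0$, i.e. whenever $x$ lies off the axis $\gamma$; the axis points are then recovered by continuity as $\rho\to0$, where the right-hand side of the claimed formula tends to $\pi^2/\sin^2(\pi a)$.

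Next I would compute $\sigma(a,\rho):=\sum_{k\in\zz}\tfrac{1}{(k-a)^2+\rho^2}$ by the usual contour argument. Consider the meromorphic function $f(z)=\dfrac{\pi\cot(\pi z)}{(z-a)^2+\rho^2}$; it has a simple pole at each integer $k$ with residue $\tfrac{1}{(k-a)^2+\rho^2}$, and simple poles at $z=a\pm i\rho$ with residues $\pm\tfrac{\pi}{2i\rho}\cot(\pi(a\pm i\rho))$. Integrating $f$ over the boundary of the square with vertices $(\pm1\pm i)(N+\tfrac12)$ and using that $\cot(\pi z)$ stays bounded by a fixed constant on these contours while the denominator grows like $N^2$, the contour integral tends to $0$ as $N\to\infty$, so the sum of all residues of $f$ vanishes. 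This yields
\[
\sigma(a,\rho)=\frac{\pi}{2i\rho}\Big(\cot\big(\pi(a-i\rho)\big)-\cot\big(\pi(a+i\rho)\big)\Big).
\]
(Equivalently one may split $\tfrac{1}{(k-a)^2+\rho^2}=\tfrac{1}{2i\rho}\big(\tfrac{1}{k-a-i\rho}-\tfrac{1}{k-a+i\rho}\big)$ and invoke the Mittag--Leffler expansion $\pi\cot(\pi w)=\sum_{k\in\zz}\tfrac{1}{w+k}$, summed over symmetric partial sums.)

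Finally I would simplify by elementary trigonometric identities. With $A=\pi(a-i\rho)$, $B=\pi(a+i\rho)$ one has $\cot A-\cot B=\dfrac{\sin(B-A)}{\sin A\,\sin B}$, where $\sin(B-A)=\sin(2\pi i\rho)=i\sinh(2\pi\rho)$ and $\sin A\,\sin B=\tfrac12\big(\cos(2\pi i\rho)-\cos(2\pi a)\big)=\tfrac12\big(\cosh(2\pi\rho)-\cos(2\pi a)\big)$; substituting and expressing $\sinh,\cosh$ through exponentials gives
\[
\sigma(a,\rho)=\frac{\pi}{\rho}\cdot\frac{e^{2\pi\rho}-e^{-2\pi\rho}}{e^{2\pi\rho}+e^{-2\pi\rho}-2\cos(2\pi a)},
\]
and multiplying by $1/(dh^2)$ gives exactly the asserted expression for $V$. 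The only genuinely delicate step is the justification of the residue computation: establishing the decay $|f(z)|=O(N^{-2})$ along the expanding squares, equivalently the uniform bound on $\cot(\pi z)$ there (or, in the Mittag--Leffler version, the necessity of pairing the conditionally convergent tails symmetrically). Everything else is routine algebra.
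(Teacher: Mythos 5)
Your proposal is correct and follows essentially the same route as the paper: reduce $V$ to the one--dimensional series $\frac{1}{dh^2}\sum_{k\in\zz}\frac{1}{(k-a)^2+\rho^2}$ and evaluate it by the cotangent residue summation formula at the poles $z=a\pm i\rho$. The only differences are cosmetic --- you justify the summation formula via the expanding-squares contour (the paper simply cites it) and simplify with the identity $\cot A-\cot B=\sin(B-A)/(\sin A\sin B)$ instead of expanding the cotangents into exponentials directly.
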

\begin{proof} \textit{Step 1:} We show that
	 $$|x- k \ov{h}|^2= dh^2 ((k-a)^2 + \rho^2), \mbox{ so that }V(x)=\frac{1}{dh^2}\sum_{k\in \zz}\frac{1}{(k-a)^2 + \rho^2}.$$
	 Indeed,
	 \begin{align*}
	 |x- k \ov{h}|^2&= \sum_{j=1}^d |x_j-k h|^2=h^2 \sum_{j=1}^d \left(k^2 - 2 k \frac{x_j}{h}+ \frac{x_j^2}{h^2}\right)\\
	 & = dh^2 \Bigg(k^2 -2 k \frac{\sum\limits_{j=1}^d x_j}{dh}+\frac{\sum\limits_{j=1}^d x_j^2}{dh^2}\Bigg)\\
	 &= dh^2 \left((k-a)^2+ \frac{1}{d^2 h^2} \left(d \sum_{j=1}^d x_j^2-\left(\sum_{j=1}^d x_j\right)^2\right)\right)\\
	 &= dh^2 \left((k-a)^2+ \frac{1}{2 d^2 h^2 } \sum_{k,j=1}^{d}(x_j-x_k)^2\right)\\
	 &= dh^2 ((k-a)^2 + \rho^2).
	 \end{align*}

	 \textit{Step 2}: We now apply the following residual formula (\cite{Spiegel}, pp. 227, Proposition 7.25):
	 \begin{equation}\label{residual}\sum_{k\in \zz} f(k) =-\sum_{n=1}^\infty  \rez (\pi f(z)\cot (\pi z), z=z_n),
	 \end{equation}
with $z_n$ being the poles of $f$, to compute the series
	 $\sum_{k\in \zz} f(k)$ with
	 $f(z)=\frac{1}{(z-a)^2 + \rho^2}$. The poles of this complex function $f$ are of order one and they are given by $z_\pm=a\pm i \rho$. Therefore, in view of \eqref{residual} we have
	 \begin{align*}
	 \sum_{k\in \zz} \frac{1}{(k-a)^2 + \rho^2}&= -  \rez (\pi f(z)\cot (\pi z), z=z_-)-\rez (\pi f(z)\cot (\pi z), z=z_+)\\
	 &=: -T_--T_+.
	 \end{align*}
	 For the sake of clarity, let us completely compute the term $T_-$:
	 \begin{align*}
	 T_-&= \lim_{z\rightarrow z_-} (z-z_-)\pi f(z) \cot (\pi z) =\lim_{z\rightarrow z_-} \frac{\pi}{z-z_+} \cot (\pi z)\\
	 & =\frac{\pi}{z_--z_+} \cot (\pi z_-)= -\frac{1}{2 i \rho } \cot (\pi (a-i \rho))
	 \end{align*}
	 Similarly,
	 \begin{equation*}
	 T_+= \frac{1}{2 i \rho} \cot (\pi (a+i \rho)).
	 \end{equation*}
	 Therefore,
	 \begin{align*}
	 \sum_{k\in \zz} \frac{1}{(k-a)^2 + \rho^2} &= \frac{1}{2 i \rho } \left(\cot (\pi (a-i \rho))- \cot (\pi (a+i \rho))\right)\\
	 & =\frac{\pi}{2 \rho}\left(\frac{e^{i\pi (a-i \rho)}+ e^{-i\pi (a-i \rho)}}{e^{i\pi (a-i \rho)}-e^{-i\pi (a-i \rho)}}-\frac{e^{i\pi (a+i \rho)}+ e^{-i\pi (a+i \rho)}}{e^{i\pi (a+i \rho)}-e^{-i\pi (a+i \rho)}}\right)\\
	 & =\frac{\pi}{2 \rho} \left (\frac{e^{2\pi \rho}-e^{2\pi a i}+e^{-2\pi a i}- e^{-2\pi \rho}}{|e^{i\pi (a-i \rho)}-e^{-i\pi (a-i \rho)}|^2}- \frac{e^{-2\pi \rho}-e^{2\pi a i}+e^{-2\pi a i}- e^{2\pi \rho}}{|e^{i\pi (a+i \rho)}-e^{-i\pi (a+i \rho)}|^2}\right) \\
	 &= \frac{\pi}{\rho} \frac{ e^{2\pi \rho}-  e^{-2\pi \rho}}{e^{2\pi \rho}+ e^{-2 \pi \rho} - 2 \cos (2\pi a)}.
	 \end{align*}
	
	Combining the two steps, the proof is completed.
\end{proof}
\begin{obs} Let us also observe that $a$, $\rho$ and $|x|$ are related through the identity
	 \begin{equation}\label{relation}
	 |x|^2 = dh^2 (\rho^2 +a^2),
	 \end{equation}
	 since
	 \begin{align*}
	 2 d^2h^2\rho^2=\sum\limits_{k, j=1}^d (x_j- x_k)^2 =  2d  \sum\limits_{k, j=1}^d x_j^2 -2 \left(\sum\limits_{j=1}^d x_j\right)^2 =2d |x|^2 -2 d^2 h^2 a^2.
	 \end{align*}
\end{obs}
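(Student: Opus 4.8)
The final statement is the Remark establishing the identity $|x|^2 = dh^2(\rho^2 + a^2)$.

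The plan is to verify this purely algebraically, exploiting the definitions of $\rho$ and $a$ already introduced in Proposition \ref{comp_potential}. First I would recall that $a = \frac{1}{dh}\sum_{j=1}^d x_j$ and $\rho^2 = \frac{1}{2d^2h^2}\sum_{j,k=1}^d (x_j-x_k)^2$, so that proving \eqref{relation} amounts to showing $2d^2h^2\rho^2 = 2d|x|^2 - 2d^2h^2a^2$, i.e.
\begin{equation*}
\sum_{j,k=1}^d (x_j-x_k)^2 = 2d\sum_{j=1}^d x_j^2 - 2\Big(\sum_{j=1}^d x_j\Big)^2.
\end{equation*}

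The key step is the standard expansion of the double sum: writing $(x_j-x_k)^2 = x_j^2 - 2x_jx_k + x_k^2$ and summing over $j,k \in \{1,\dots,d\}$, the terms $\sum_{j,k} x_j^2$ and $\sum_{j,k} x_k^2$ each contribute $d\sum_{j=1}^d x_j^2 = d|x|^2$, while the cross term contributes $-2\sum_{j,k} x_j x_k = -2\big(\sum_j x_j\big)\big(\sum_k x_k\big) = -2\big(\sum_{j=1}^d x_j\big)^2$. This is exactly the displayed identity, and it is the same computation already carried out in Step 1 of the proof of Proposition \ref{comp_potential} (where $|x-k\ov h|^2$ was recast in terms of $(k-a)^2$ and $\rho^2$); here one simply sets $k=0$ in that formula, since $|x - 0\cdot\ov h|^2 = |x|^2 = dh^2(a^2 + \rho^2)$ follows immediately.

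There is essentially no obstacle: the only care needed is bookkeeping of the factor $2d^2h^2$ coming from the normalization in the definition of $\rho$, and recognizing that $d^2h^2a^2 = \big(\sum_j x_j\big)^2$. Dividing the verified identity through by $2d$ and substituting back the definitions of $a$ and $\rho$ yields $|x|^2 = dh^2\rho^2 + dh^2 a^2$, which is \eqref{relation}.
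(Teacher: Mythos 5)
Your proof is correct and follows essentially the same route as the paper: both expand $\sum_{j,k=1}^d(x_j-x_k)^2 = 2d\sum_j x_j^2 - 2\bigl(\sum_j x_j\bigr)^2$ and identify the pieces with $2d|x|^2$ and $2d^2h^2a^2$. Your additional observation that the identity is just Step~1 of Proposition~\ref{comp_potential} evaluated at $k=0$ is a tidy shortcut, but it rests on the same computation rather than constituting a different argument.
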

To simplify the computations, we consider the particular case $dh=1$. Then we obtain
$$a=\sum\limits_{j=1}^d x_j; \quad \rho =\frac{1}{\sqrt{2}} \sqrt{\sum\limits_{j, k=1}^d(x_j-x_k)^2};\quad |x|^2=\frac{\rho^2+a^2}{d}. $$

\begin{prop}The right circular cylinder $\mathcal{C}_R$ of radius $R$ with the axis $x_1=x_2=\cdots=x_d$ is characterised by $\rho\leq R\sqrt{d}$. 
\end{prop}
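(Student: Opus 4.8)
The plan is to reduce the statement to a direct computation of the Euclidean distance from an arbitrary point $x\in\rr^d$ to the axis line $\gamma$. Since, by definition, the right circular cylinder $\mathcal{C}_R$ consists exactly of those points whose distance to $\gamma$ is at most $R$, it suffices to establish the identity $\dist(x,\gamma)=\rho/\sqrt{d}$ (in the normalized case $dh=1$); the desired characterization $\rho\le R\sqrt{d}$ is then immediate.

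First I would parametrize the axis as $\gamma=\{t(1,1,\ldots,1)\,:\,t\in\rr\}$ and observe that its unit direction vector is $e=\frac{1}{\sqrt{d}}(1,\ldots,1)$. For any $x\in\rr^d$, the distance to $\gamma$ is the length of the component of $x$ orthogonal to $e$, namely
$$\dist(x,\gamma)^2=|x|^2-(x\cdot e)^2.$$
In the normalized case $dh=1$ one has $a=\sum_{j=1}^d x_j$, hence $x\cdot e=\frac{1}{\sqrt{d}}\sum_{j=1}^d x_j=\frac{a}{\sqrt{d}}$ and $(x\cdot e)^2=a^2/d$.

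Next I would invoke the relation between $a$, $\rho$ and $|x|$ already established in Remark \ref{relation} (in the form $|x|^2=(\rho^2+a^2)/d$ valid when $dh=1$), which gives
$$\dist(x,\gamma)^2=\frac{\rho^2+a^2}{d}-\frac{a^2}{d}=\frac{\rho^2}{d},$$
so that $\dist(x,\gamma)=\rho/\sqrt{d}$. Therefore $x\in\mathcal{C}_R$ if and only if $\dist(x,\gamma)\le R$, i.e. if and only if $\rho\le R\sqrt{d}$, which is the claim. (In the general case $dh\neq1$ the same computation, using $x\cdot e=h\sqrt{d}\,a$ and $|x|^2=dh^2(\rho^2+a^2)$, yields $\dist(x,\gamma)=h\sqrt{d}\,\rho$, consistent with the normalized identity.)

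There is essentially no serious obstacle here: the argument is just the orthogonal decomposition of a vector along a line. The only points requiring care are bookkeeping the normalization $dh=1$ and citing \eqref{relation} in the correct specialized form, so that the algebraic cancellation of the $a^2$ term is transparent.
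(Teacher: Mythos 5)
Your argument is correct and is essentially the paper's own proof: both compute the distance from a point $x$ to the axis $\gamma$ as the norm of the component of $x$ orthogonal to the direction $(1,\ldots,1)$ and reduce it, via the identity $|x|^2-a^2/d=\rho^2/d$ (equation \eqref{relation} with $dh=1$), to $\dist(x,\gamma)=\rho/\sqrt{d}$. The paper merely phrases the projection step by explicitly constructing the foot of the perpendicular $P'$ on $\gamma$, while you use the formula $\dist(x,\gamma)^2=|x|^2-(x\cdot e)^2$; the computation is identical (and note that \eqref{relation} is an equation, not a remark).
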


\textit{Proof.} The points $P(x_1^0,x_2^0,\cdots,x_d^0)$ on the lateral surface of the cylinder are characterised by $\dist(P,\gamma)=R$, where $\gamma$ is the axis of the cylinder, $\gamma:x_1=x_2=\cdots=x_d=t$. 

The plane $\pi$ passing by $P$ and perpendicular on $\gamma$ has the equation $$\pi: x_1-x_1^0+x_2-x_2^0+\cdots+x_d-x_d^0=0.$$ 

The projection of the point $P$ on the axis $\gamma$ is a point $P'$ obtained as intersection between the plane $\pi$ and the line $\gamma$, i.e. $P'(t,\cdots,t)$, where $t=\frac{x_1^0+x_2^0+\cdots+x_d^0}{d}$. 

The distance between $P$ and $\gamma$ is in fact the distance between $P$ and $P'$, i.e.
\begin{align*}R&=\dist(P,P')=\sqrt{\left(x_1^0-\frac{x_1^0+x_2^0+\cdots+x_d^0}{d}\right)^2+\cdots+\left(x_d^0-\frac{x_1^0+x_2^0+\cdots+x_d^0}{d}\right)^2}
\\&=\sqrt{|x^0|^2-\frac{(a^0)^2}{d}}=\sqrt{\frac{(\rho^0)^2}{d}}=\frac{\rho^0}{\sqrt{d}}.\end{align*}
The lateral surface of the cylinder is given by $\rho^0=R\sqrt{d}$ and the conclusion of the proposition follows for the interior of the cylinder. 

\section{Main results}\label{main_results}

The following theorems are the main results of the paper. 

\begin{Th}\label{th1} 
	 It holds 
	\begin{equation}\label{Hardy}\int_{\mathcal{C}_R} |\nabla u|^2 \dx \geq \frac{(d-2)^2}{4\pi  R\sqrt{d} \coth(\pi R \sqrt{d})} \int_{\mathcal{C}_R} V u^2 \dx, \quad \forall u \in C_0^\infty (\mathcal{C}_R\setminus\cup_i \{A_i\}).
	\end{equation}
\end{Th}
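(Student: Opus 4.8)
The plan is to deduce \eqref{Hardy} from the Picone--type identity of Allegretto and Huang \cite[Thms. 1.1, 2.1]{Allegretto}: for any $v>0$ of class $C^2$ on $\mathcal{C}_R\setminus\cup_i\{A_i\}$ one has
\[
\int_{\mathcal{C}_R}|\nabla u|^2\dx\;\geq\;\int_{\mathcal{C}_R}\frac{-\Delta v}{v}\,u^2\dx,\qquad\forall u\in C_0^\infty(\mathcal{C}_R\setminus\cup_i\{A_i\}),
\]
the boundary and integrability issues being harmless because $u$ is compactly supported away from the poles, where $v$ is smooth and bounded away from $0$. Hence it suffices to exhibit such a $v$ obeying the pointwise supersolution inequality $-\Delta v\geq\lambda V\,v$ on $\mathcal{C}_R\setminus\cup_i\{A_i\}$, with $\lambda=(d-2)^2/(4\pi R\sqrt d\coth(\pi R\sqrt d))$. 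I would take
\[
v:=Q^{-\alpha},\qquad Q:=\sinh^2(\pi\rho)+\sin^2(\pi a)=\tfrac12\bigl(\cosh(2\pi\rho)-\cos(2\pi a)\bigr),
\]
i.e.\ essentially the reciprocal of the $(\rho,a)$--factor in Proposition \ref{comp_potential}, raised to a power $\alpha>0$ to be fixed only at the end; note $Q>0$ on $\mathcal{C}_R\setminus\cup_i\{A_i\}$ since $Q$ vanishes exactly at the poles, so $v$ is smooth and positive there.

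The computation of $-\Delta v/v$ rests on two facts. First, in the normalization $dh=1$ we have $a=\sum_j x_j$ and $\rho=\sqrt{d|x|^2-a^2}$, whose $x$--gradients are orthogonal with $|\nabla_x a|^2=|\nabla_x\rho|^2=d$, and $\Delta_x a=0$, $\Delta_x\rho=d(d-2)/\rho$; therefore any $F=F(\rho,a)$ satisfies
\[
\Delta_x F=d\bigl(\partial_\rho^2F+\partial_a^2F\bigr)+\frac{d(d-2)}{\rho}\,\partial_\rho F,\qquad |\nabla_x F|^2=d\bigl((\partial_\rho F)^2+(\partial_a F)^2\bigr).
\]
Second --- the key point --- $\log Q$ is harmonic in the variables $(\rho,a)$: indeed $Q=\frac14 e^{2\pi\rho}\,|1-e^{2\pi i(a+i\rho)}|^2$, so $\log Q$ differs from $\mathrm{Re}\log\bigl(1-e^{2\pi i(a+i\rho)}\bigr)$ by the harmonic (in fact linear) function $2\pi\rho+\text{const}$; equivalently one checks $\partial_\rho^2\log Q+\partial_a^2\log Q=0$ directly from $\partial_\rho\log Q=\pi\sinh(2\pi\rho)/Q$, $\partial_a\log Q=\pi\sin(2\pi a)/Q$ and $\sinh^2(2\pi\rho)+\sin^2(2\pi a)=2Q(\cosh(2\pi\rho)+\cos(2\pi a))$. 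Consequently $\Delta_x\log Q=\frac{d(d-2)}{\rho}\partial_\rho\log Q$, and comparing with the explicit form $V=\frac{d\pi\sinh(2\pi\rho)}{2\rho Q}$ coming from Proposition \ref{comp_potential} gives the clean identity $\Delta_x\log Q=2(d-2)V$; likewise $|\nabla_x\log Q|^2=\frac{2d\pi^2(\cosh(2\pi\rho)+\cos(2\pi a))}{Q}$. Writing $v=e^{-\alpha\log Q}$ then yields
\[
\frac{-\Delta v}{v}=\alpha\,\Delta_x\log Q-\alpha^2|\nabla_x\log Q|^2=2\alpha(d-2)\,V-\alpha^2|\nabla_x\log Q|^2.
\]

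It remains to force $-\Delta v/v\geq\lambda V$. Dividing by $V>0$ and inserting the two formulas above, this is equivalent to
\[
2\alpha(d-2)-\lambda\;\geq\;4\alpha^2\,\pi\rho\,\frac{\cosh(2\pi\rho)+\cos(2\pi a)}{\sinh(2\pi\rho)}.
\]
Since $\cos(2\pi a)\leq1$ and $(\cosh(2\pi\rho)+1)/\sinh(2\pi\rho)=\coth(\pi\rho)$, and since $t\mapsto\pi t\coth(\pi t)$ is nondecreasing while $\rho\leq R\sqrt d$ on $\mathcal{C}_R$ (by the description $\mathcal{C}_R=\{\rho\le R\sqrt d\}$ established in Section~2), the right--hand side is at most $4\alpha^2\,\pi R\sqrt d\coth(\pi R\sqrt d)$. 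Thus it suffices that $\lambda\leq 2\alpha(d-2)-4\alpha^2\,\pi R\sqrt d\coth(\pi R\sqrt d)$, and maximizing the right--hand side over $\alpha>0$ gives $\alpha=\frac{d-2}{4\pi R\sqrt d\coth(\pi R\sqrt d)}$ and optimal value exactly $\frac{(d-2)^2}{4\pi R\sqrt d\coth(\pi R\sqrt d)}=\lambda$. With this choice of $\alpha$ the pointwise inequality holds throughout $\mathcal{C}_R\setminus\cup_i\{A_i\}$, and the Allegretto--Huang inequality yields \eqref{Hardy}.

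The only substantial step is the middle one: establishing $\Delta_x\log Q=2(d-2)V$, whose core is the harmonicity of $\log Q$ in $(\rho,a)$; once this structural identity is in hand, the proof reduces to the elementary one--variable optimization $\max_{\alpha>0}\bigl(2\alpha(d-2)-4\alpha^2\,\pi R\sqrt d\coth(\pi R\sqrt d)\bigr)$. A secondary, entirely routine point is the justification of the Picone/Allegretto--Huang identity (absence of boundary terms, finiteness of the integrals), which is immediate since admissible $u$ are compactly supported away from the poles, where $v=Q^{-\alpha}$ is smooth and positive.
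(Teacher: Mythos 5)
Your proof is correct and follows essentially the same route as the paper: the same Allegretto--Huang supersolution argument with the same test function (your $Q^{-\alpha}$ is a constant multiple of the paper's $\theta^{\alpha}$ with $\alpha<0$), the same pointwise bound $\rho\,(\cosh(2\pi\rho)+\cos(2\pi a))/\sinh(2\pi\rho)\le R\sqrt{d}\,\coth(\pi R\sqrt{d})$ on $\mathcal{C}_R$, and the same optimization in $\alpha$. The only difference is presentational: you organize the computation of $-\Delta v/v$ via the harmonicity of $\log Q$ in $(\rho,a)$, whereas the paper differentiates $\theta^{\alpha}$ directly; both yield exactly the identity \eqref{raport}.
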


Next theorem provides lower and upper bounds for the Hardy constant $\mu(\mathcal{C}_R)$ and implicitly the asymptotic behaviour of $\mu(\mathcal{C}_R)$  as $R$ tends to zero:
\begin{Th}\label{th2} 
	We denote 
	$$\mu(\mathcal{C}_R):= \inf_{u\in C_0^\infty (\mathcal{C}_R\setminus\cup_i \{A_i\}), u\neq 0} \frac{\int_{\mathcal{C}_R} |\nabla u|^2 \dx}{\int_{\mathcal{C}_R} V u^2 \dx}.$$ 
	\begin{enumerate}[(i).]
		\item For any $R>0$ it holds
		\begin{equation}\label{bounds}
		\frac{(d-2)^2}{4\pi  R\sqrt{d} \coth(\pi R \sqrt{d})}\leq \mu(\mathcal{C}_R) \leq \frac{(d-2)^2}{4}.
		\end{equation}		
		\item Consequently, 
		\begin{equation}\label{asympt}
		\lim_{R\searrow 0} \mu(\mathcal{C}_R)=\frac{(d-2)^2}{4}. 
		\end{equation}
	\end{enumerate}
\end{Th}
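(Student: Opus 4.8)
The plan is to derive both parts of the theorem from Theorem~\ref{th1} together with the optimality of the classical Hardy constant $(d-2)^2/4$ in $\rr^d$. The left inequality in \eqref{bounds} is merely a restatement of Theorem~\ref{th1}: dividing \eqref{Hardy} by $\int_{\mathcal{C}_R}Vu^2\dx$ and taking the infimum over the admissible $u$ gives at once $\mu(\mathcal{C}_R)\geq \frac{(d-2)^2}{4\pi R\sqrt d\,\coth(\pi R\sqrt d)}$. The only real work in (i) is therefore the right inequality $\mu(\mathcal{C}_R)\leq (d-2)^2/4$, and I would establish it by testing with functions that concentrate at a single pole.

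Concretely, fix the pole $A_0$, which by the normalization $a_k=k\ov{h}$ we may take to be the origin, and choose $\delta>0$ so small that $\overline{B_\delta}\subset\mathcal{C}_R$ and $\overline{B_\delta}$ contains no pole $A_k$ with $k\neq 0$. On $B_\delta$ split $V(x)=|x|^{-2}+W(x)$ with $W(x):=\sum_{k\neq 0}|x-A_k|^{-2}$; the series for $W$ converges uniformly on $\overline{B_\delta}$ (equivalently, by Proposition~\ref{comp_potential}, $V-|x|^{-2}$ extends continuously across the origin), so $\|W\|_{L^\infty(B_\delta)}<\infty$. Now take the standard almost-minimizers for \eqref{HardyIneq}, say $u_\eps(x)=|x|^{-\frac{d-2}{2}+\eps}\phi(x)$ with $\phi\in C_0^\infty(B_\delta)$ equal to $1$ near the origin, and let $\eps\searrow 0$. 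It is classical that $\int_{\rr^d}|\nabla u_\eps|^2\dx=\big(\tfrac{(d-2)^2}{4}+o(1)\big)\int_{\rr^d}u_\eps^2|x|^{-2}\dx$ and $\int_{\rr^d}u_\eps^2|x|^{-2}\dx\to\infty$, whereas $\int_{B_\delta}Wu_\eps^2\dx\leq\|W\|_{L^\infty(B_\delta)}\int_{B_\delta}u_\eps^2\dx$ stays bounded as $\eps\to 0$. Hence
\begin{equation*}
\frac{\int_{\mathcal{C}_R}|\nabla u_\eps|^2\dx}{\int_{\mathcal{C}_R}Vu_\eps^2\dx}=\frac{\int_{\rr^d}|\nabla u_\eps|^2\dx}{\int_{\rr^d}u_\eps^2|x|^{-2}\dx+\int_{B_\delta}Wu_\eps^2\dx}\xrightarrow[\eps\to 0]{}\frac{(d-2)^2}{4}.
\end{equation*}
A routine truncation-and-mollification step (cutting out a shrinking ball around $0$ and smoothing, which affects the ratio by $o(1)$ thanks again to the classical Hardy inequality on $B_\delta$ controlling the $|x|^{-2}$-term and $\|W\|_\infty$ controlling the rest) replaces $u_\eps$ by a genuine element of $C_0^\infty(\mathcal{C}_R\setminus\cup_i\{A_i\})$ without changing the limit, and the right inequality in \eqref{bounds} follows, completing (i).

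For (ii), I would simply let $R\searrow 0$ in \eqref{bounds}: since $t\mapsto t\coth t$ is continuous on $(0,\infty)$ with $\lim_{t\to 0^+}t\coth t=1$, the left-hand side of \eqref{bounds} tends to $(d-2)^2/4$, which is exactly the $R$-independent right-hand side, so \eqref{asympt} follows by the squeeze theorem. The only point demanding any care is the upper bound in (i) — verifying that the background potential $W$ is a negligible perturbation, i.e. $\int_{B_\delta}Wu_\eps^2\dx=o\big(\int u_\eps^2|x|^{-2}\dx\big)$, and that the non-smooth concentrating profiles can legitimately be approximated inside the admissible test class — but both are standard, so I expect no genuine obstacle; the quantitative substance of the result is the lower bound already furnished by Theorem~\ref{th1}.
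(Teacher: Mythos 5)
Your proposal is correct and follows essentially the same route as the paper: the lower bound is read off from Theorem~\ref{th1}, the upper bound comes from near-optimizers of the classical single-pole Hardy inequality concentrated at one pole (the paper packages this as Lemma~\ref{lema1.int}(ii), using the pointwise bound $V(x)|x-a_1|^2>1$ near $a_1$ to transfer the reversed inequality to $V$), and (ii) follows by squeezing with $t\coth t\to 1$. The only remark worth making is that your estimate $\int_{B_\delta}Wu_\eps^2\dx=O(1)$ is not even needed for the upper bound: since $W\geq 0$, the inequality $\int Vu_\eps^2\dx\geq\int u_\eps^2|x|^{-2}\dx$ alone gives $\limsup$ of the quotient $\leq (d-2)^2/4$, which is exactly the shortcut the paper takes.
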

\subsection*{Proofs of the main results}

Next we invoke an easy but very useful identity from \cite{Allegretto} which particularly says that
\begin{equation}\label{gen}
\int_{\mathcal{C}_R} \left( |\n u|^2 +\frac{\D \phi}{\phi}u^2\right)\dx =\int_{\mathcal{C}_R} \Big|\n u-
\frac{\n \phi}{\phi} u\Big|^2 \dx =\int_{\mathcal{C}_R} \phi^2|\n (u \phi^{-1})|^2 \dx, \q
\end{equation}
for any $u \in C_0^\infty (\mathcal{C}_R\setminus\cup_i \{A_i\})$ and  any positive function with $\phi \in C_0^\infty (\mathcal{C}_R\setminus\cup_i \{A_i\})$, admitting possible singularities at the points $A_i$.  The proof of \eqref{gen} is straightforward and requires direct integrations by parts. The difficulty comes from the election of the test functions $\phi$.

For some $\l>0$ and the singular potential $V$ in \eqref{potential},  we want to identify $\phi=\phi_\l$ in \eqref{gen} such
that
\begin{equation}\label{newh}
\int_{\mathcal{C}_R} \Big(|\n u|^2-\l V(x)u^2 \Big)\dx\geq \int_{\mathcal{C}_R} \Big( |\n u|^2
+\frac{\D \phi_\l }{\phi_\l} u^2\Big)\dx\geq 0, \q \forall u \in C_0^\infty (\mathcal{C}_R\setminus\cup_i \{A_i\}),
\end{equation}
and then, in view of \eqref{gen},  we get 
\begin{equation}\label{neww}
\int_{\mathcal{C}_R} |\n u|^2 \dx\geq \l \int_{\mathcal{C}_R} V(x)u^2 \dx, \quad  \forall u \in C_0^\infty (\mathcal{C}_R\setminus\cup_i \{A_i\}),
\end{equation}
For that it suffices 
 to find supersolutions $\phi>0$ for some $\lambda >0$  for the equation
\begin{equation}\label{supersol}
-\Delta \phi = \lambda V \phi, \quad \textrm{ in } \rr^d\setminus\cup_{i\in \zz}\{A_i\}
\end{equation}
and maximize $\lambda>0$ among the admissible pairs $(\phi, \lambda)$.
	We are looking for $\phi$ of the following form
\begin{equation}\label{phi}
\phi(x)=(e^{2\pi \rho}+e^{-2\pi \rho}-2 \cos (2\pi a))^{\alpha}:= \theta(x)^\alpha,
\end{equation}
where $\alpha\in \rr$ will be precised later in order to maximize $\lambda$.

\begin{lema}\label{lemma} We get the following expressions for $\nabla$, $\mathrm{div}$ and  $\Delta$ applied to $a$ and $\rho$:
	\begin{enumerate}[(i).]
		\item  $\nabla a=(1, \ldots, 1):= \ov{1}$; $\Delta a=0$.
		\item $\nabla \rho =\frac{d}{\rho} x- \frac{a}{\rho} \ov{1}$; $|\nabla \rho|^2=d$;   $\Delta \rho=\frac{d^2}{\rho}$; $\mathrm{div}\left(\frac{\nabla \rho}{\rho}\right)=\frac{d^2-3d}{\rho}$;
		\item $\nabla \rho \cdot \nabla a=0$;
	\end{enumerate}
\end{lema}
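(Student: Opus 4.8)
The plan is to compute all the listed quantities directly from the explicit coordinate formulas
$$a=\sum_{j=1}^d x_j, \qquad \rho=\frac{1}{\sqrt 2}\Bigl(\sum_{j,k=1}^d (x_j-x_k)^2\Bigr)^{1/2},$$
obtained after the normalization $dh=1$, using only elementary calculus; no deeper tool is needed, so the ``main obstacle'' is really just bookkeeping of the double sums rather than any conceptual difficulty. First I would dispatch part (i): since $a$ is linear in $x$, clearly $\nabla a=(1,\dots,1)=\ov 1$ and $\Delta a=0$. For part (iii), the cleanest route is to differentiate the identity $2\rho^2 = \sum_{j,k}(x_j-x_k)^2 = 2d\sum_j x_j^2 - 2a^2$, i.e. $d^2 h^2\cdot(\text{something})$; concretely from $\rho^2 = d\sum_j x_j^2 - a^2$ (using $dh=1$) one gets $2\rho\,\n\rho = 2d\,x - 2a\,\ov 1$, whence
$$\n\rho = \frac{d}{\rho}x-\frac{a}{\rho}\ov 1,$$
which is exactly the claimed formula and simultaneously makes $\n\rho\cdot\n a = \n\rho\cdot\ov 1 = \frac{1}{\rho}\bigl(d\sum_j x_j - a\,d\bigr)=\frac{1}{\rho}(da-da)=0$, settling (iii).

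For the remaining items in (ii) I would proceed in order. For $|\n\rho|^2$, square the gradient formula: $|\n\rho|^2 = \frac{1}{\rho^2}\bigl(d^2|x|^2 - 2da(\sum_j x_j) + a^2 d\bigr) = \frac{1}{\rho^2}(d^2|x|^2 - da^2)$, and then invoke the relation $|x|^2 = (\rho^2+a^2)/d$ recorded in the Remark after Proposition~\ref{comp_potential} (with $dh=1$), which gives $d^2|x|^2 - da^2 = d(\rho^2+a^2) - da^2 = d\rho^2$, so $|\n\rho|^2 = d$. For $\Delta\rho$, take the divergence of $\n\rho = \frac{d}{\rho}x - \frac{a}{\rho}\ov 1$: using $\mathrm{div}(x) = d$, $\n a\cdot\ov 1 = d$, $\n\rho\cdot x = \frac{1}{\rho}(d|x|^2 - a\sum_j x_j) = \frac{1}{\rho}(d|x|^2 - a^2)=\frac{1}{\rho}\rho^2 = \rho$ (again by the $|x|^2$ relation), and $\n\rho\cdot\ov 1 = 0$ from (iii), one computes
$$\Delta\rho = d\Bigl(\frac{d}{\rho} - \frac{\n\rho\cdot x}{\rho^2}\Bigr) - \frac{0\cdot\rho - a\,\n\rho\cdot\ov 1}{\rho^2}\cdot d = \frac{d^2}{\rho} - \frac{d}{\rho} \cdot\frac{d\cdot\rho}{\rho^2}\cdot\rho^{?}$$ — here I would be careful to write $\Delta\rho = \tfrac{d^2}{\rho} - \tfrac{d}{\rho^2}\n\rho\cdot x = \tfrac{d^2}{\rho}-\tfrac{d}{\rho^2}\cdot\rho = \tfrac{d^2}{\rho}-\tfrac{d}{\rho}$, which is $\frac{d^2-d}{\rho}$; I note the statement claims $\frac{d^2}{\rho}$, so in writing the final proof I would recheck this constant carefully (the discrepancy may come from a different normalization of $\rho$, or from whether the $\sqrt 2$ and the factor $d$ are absorbed as stated).

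Finally, for $\mathrm{div}\!\left(\frac{\n\rho}{\rho}\right)$ I would use the product rule $\mathrm{div}(\rho^{-1}\n\rho) = \rho^{-1}\Delta\rho - \rho^{-2}|\n\rho|^2 = \frac{\Delta\rho}{\rho} - \frac{d}{\rho^2}$; plugging in the value of $\Delta\rho$ from the previous step yields the stated $\frac{d^2-3d}{\rho}$ up to the same constant check (with $\Delta\rho=\frac{d^2-d}{\rho}$ one gets $\frac{d^2-d-d}{\rho}=\frac{d^2-2d}{\rho}$, so again I would reconcile the normalization before committing to $d^2-3d$). The only genuinely delicate point in the whole lemma is thus getting these dimensional constants exactly right; everything else is a short, mechanical computation, and once these identities are in hand they feed directly into computing $\Delta\phi/\phi$ for the test function $\phi=\theta^\alpha$ in the next step of the argument.
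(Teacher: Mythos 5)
Your overall route is the same as the paper's (a direct coordinate computation, carried out in its Section 4), except that you differentiate the compact identity $\rho^2=d|x|^2-a^2$ instead of the double sum, which is slightly cleaner. Parts (i) and (iii), the formula for $\nabla\rho$, $|\nabla\rho|^2=d$, and the identity $\nabla\rho\cdot x=\rho$ are all correct.

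The genuine gap is in your computation of $\Delta\rho$. Taking the divergence of $\nabla\rho=\frac{d}{\rho}x-\frac{a}{\rho}\ov 1$, the second term contributes
\begin{equation*}
\mathrm{div}\Bigl(\frac{a}{\rho}\,\ov 1\Bigr)=\frac{\nabla a\cdot\ov 1}{\rho}-\frac{a\,(\nabla\rho\cdot\ov 1)}{\rho^2}=\frac{d}{\rho}-0=\frac{d}{\rho},
\end{equation*}
whereas in your displayed formula this term evaluates to $0$ (you wrote $0\cdot\rho$ in the numerator where $\nabla a\cdot\ov 1=d$ belongs). Restoring it gives $\Delta\rho=\frac{d^2}{\rho}-\frac{d}{\rho}-\frac{d}{\rho}=\frac{d^2-2d}{\rho}$, not your $\frac{d^2-d}{\rho}$. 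You were right to distrust the constant, but the mismatch with the statement is not a normalization issue: the lemma as printed contains typos, and your own value is also off by $d/\rho$. The paper's Section 4 computation yields $\Delta\rho=\frac{(d-2)d}{d^2h^2\rho}=\frac{d^2-2d}{\rho}$ for $dh=1$, and this is the value actually used downstream (it is the coefficient $\frac{d^2-2d}{\rho}$ appearing in $\Delta\theta$) and the one consistent with $\mathrm{div}(\nabla\rho/\rho)=\frac{\Delta\rho}{\rho}-\frac{|\nabla\rho|^2}{\rho^2}=\frac{d^2-3d}{\rho^2}$ (the statement's denominator $\rho$ there is likewise a typo for $\rho^2$). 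A quick sanity check: $\rho/\sqrt d$ is the distance to the line $\gamma$, whose Laplacian is $\frac{d-2}{\mathrm{dist}}$, giving $\Delta\rho=\frac{d(d-2)}{\rho}$. With the corrected $\Delta\rho$, your product-rule step for the divergence does produce exactly $\frac{d^2-3d}{\rho^2}$, so once this one term is restored your argument is complete.
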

Lemma \ref{lemma} is proved in Section \ref{sec4}. 
In view of the basic computations in Lemma \ref{lemma} we obtain the following more detailed computations. 

\textit{Computation of $\Delta \phi$.} We successively obtain
\begin{align*}
\Delta \phi&= \alpha (\alpha-1) \theta(x)^{\alpha-2} |\nabla \theta(x)|^2 + \alpha \theta(x)^{\alpha-1}\Delta \theta(x).
\end{align*}
Since
$$|\nabla \theta(x)|^2= 4\pi^2 d(e^{2\pi \rho}-e^{-2\pi \rho})^2 + 16 \pi^2 d \sin^2(2\pi a),$$
$$\Delta \theta(x)=4\pi^2 d (e^{2\pi \rho}+ e^{-2\pi \rho}+2\cos (2\pi a))+ 2\pi (e^{2\pi \rho}-e^{-2\pi \rho})\frac{d^2-2d}{\rho}$$
we get
\begin{align*}
\Delta \phi &=\alpha \phi^{\frac{\alpha-2}{\alpha}} \Bigg\{(\alpha-1)\Big [4\pi^2 d\theta^2 + 16\pi^2 d \theta \cos(2\pi a)\Big]+\\
&+ \theta \left[2\pi (e^{2\pi\rho}-e^{-2\pi \rho})\frac{d^2-2d}{\rho}+ 4\pi^2 d  \theta + 16 \pi^2 d \cos (2\pi a)\right]
 \Bigg\}\\
 &= 2(d-2) \alpha V \phi +4\alpha^2 \pi  V \phi \rho \frac{e^{2\pi \rho}+ e^{-2\pi \rho}+2\cos (2 \pi a)}{e^{2\pi \rho}- e^{-2\pi \rho}}.
\end{align*}
Therefore, we have
\begin{equation}\label{raport}
\frac{-\Delta \phi}{V \phi} = -2 \alpha (d-2) - 4\alpha^2 \pi  \rho \frac{e^{2\pi \rho}+ e^{-2\pi \rho}+2\cos (2 \pi a)}{e^{2\pi \rho}- e^{-2\pi \rho}}.
\end{equation}

\begin{prop}\label{propC1}
	There exists a constant $C_1(R)>0$ such that
	\begin{equation}
	0\leq \rho \frac{e^{2\pi \rho}+ e^{-2\pi \rho}+2\cos (2 \pi a)}{e^{2\pi \rho}- e^{-2\pi \rho}}\leq C_1(R).
	\end{equation}
	This constant is explicitely given by $$C_1(R)= R\sqrt{d} \coth(\pi R \sqrt{d}).$$
\end{prop}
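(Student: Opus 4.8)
The plan is to use the description of the cylinder established in the preceding proposition, namely that on $\mathcal{C}_R$ one has $0\le\rho\le R\sqrt d$, and to reduce the two--variable estimate to a one--variable monotonicity statement in $\rho$ alone. Put $M:=R\sqrt d$ and abbreviate
\[
g(\rho,a):=\rho\,\frac{e^{2\pi\rho}+e^{-2\pi\rho}+2\cos(2\pi a)}{e^{2\pi\rho}-e^{-2\pi\rho}},\qquad 0<\rho\le M,\ a\in\rr .
\]
For the lower bound I would first note that, by the arithmetic--geometric mean inequality, $e^{2\pi\rho}+e^{-2\pi\rho}\ge 2\ge -2\cos(2\pi a)$, so the numerator of $g$ is nonnegative; since moreover $\rho>0$ and $e^{2\pi\rho}-e^{-2\pi\rho}>0$, we get $g(\rho,a)\ge0$. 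At $\rho=0$ (the axis of the cylinder, where the poles $A_k$ lie) the quotient has a removable singularity: a Taylor expansion gives $g(\rho,a)\to\frac{1+\cos(2\pi a)}{2\pi}\ge0$ as $\rho\searrow0$, so the lower bound persists there by continuity.

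For the upper bound I would use the crude pointwise bound $\cos(2\pi a)\le1$ together with the factorization of a difference of squares:
\[
g(\rho,a)\le\rho\,\frac{e^{2\pi\rho}+e^{-2\pi\rho}+2}{e^{2\pi\rho}-e^{-2\pi\rho}}
=\rho\,\frac{(e^{\pi\rho}+e^{-\pi\rho})^2}{(e^{\pi\rho}-e^{-\pi\rho})(e^{\pi\rho}+e^{-\pi\rho})}
=\rho\coth(\pi\rho)=:\phi_0(\rho).
\]
It then remains to show that $\phi_0$ is nondecreasing on $[0,\infty)$, which immediately gives $\sup_{0\le\rho\le M}\phi_0(\rho)=\phi_0(M)=R\sqrt d\,\coth(\pi R\sqrt d)=C_1(R)$. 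Differentiating and writing $t=\pi\rho$ one finds
\[
\phi_0'(\rho)=\coth t-\frac{t}{\sinh^2 t}=\frac{\cosh t\,\sinh t-t}{\sinh^2 t}=\frac{\tfrac12\sinh(2t)-t}{\sinh^2 t}\ge0,
\]
the final inequality being the elementary $\sinh s\ge s$ for $s\ge0$. Combining the two estimates yields $0\le g(\rho,a)\le C_1(R)$ on all of $\mathcal{C}_R$.

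The calculation is essentially routine; the one point that needs a genuine (if short) argument is the monotonicity of $\rho\coth(\pi\rho)$, which I have reduced to $\sinh s\ge s$. I would also remark that $a=\sum_j x_j$ ranges over all of $\rr$ on the (infinite) cylinder, so $\cos(2\pi a)\le1$ is the sharp pointwise bound to use and the value $C_1(R)$ is approached in the limit $\rho\uparrow R\sqrt d$, $\cos(2\pi a)\uparrow1$; in particular this constant cannot be lowered, which is exactly what is needed for the optimality assertions in Theorems~\ref{th1} and \ref{th2}.
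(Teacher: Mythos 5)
Your proof is correct and follows essentially the same route as the paper: bound $\cos(2\pi a)\le 1$, factor the resulting quotient to get $\rho\coth(\pi\rho)$, and use the monotonicity of $\rho\mapsto\rho\coth(\pi\rho)$ (reduced, exactly as in the paper, to $\sinh s\ge s$) to evaluate at $\rho=R\sqrt d$. The only caveat concerns your closing aside: the sharpness of $C_1(R)$ is not what underlies the optimality assertions of Theorem~\ref{th2} (those come from the local analysis near the poles in Lemma~\ref{lema1.int}), but this remark is not part of the proof of the proposition and does not affect its validity.
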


\begin{proof} Let us denote by
$$f(x):=\rho \frac{e^{2\pi \rho}+ e^{-2\pi \rho}+2\cos (2 \pi a)}{e^{2\pi \rho}- e^{-2\pi \rho}}.$$
We get
$$
f(x)\leq \rho \coth (\pi \rho)\leq R\sqrt{d} \coth(\pi R \sqrt{d})=C_1(R).$$
The last inequality holds since $g(\rho):=\rho \coth (\pi \rho)$ is an increasing function. Indeed, 
$$g'(\rho)=\coth (\pi \rho)-\frac{4\pi\rho}{(e^{\pi\rho}-e^{-\pi\rho})^2}=\frac{e^{2\pi\rho}-e^{-2\pi\rho}-4\pi\rho}{(e^{\pi\rho}-e^{-\pi\rho})^2}\geq 0.$$
\end{proof}

Using the explicit expression (\ref{raport}) of $\frac{-\Delta \phi}{V\phi}$ and the above proposition, we get
\begin{align*}
\frac{-\Delta \phi}{V \phi} &\geq  -2 \alpha (d-2) - 4\alpha^2 \pi  C_1(R)\\
&=-4 \pi  C_1(R) \left(\alpha^2 +\frac{d-2}{2 \pi  C_1(R)}\alpha\right) \\
&= - 4 \pi  C_1(R) \left(\alpha +\frac{d-2}{4 \pi  C_1(R)}\right)^2+ \frac{(d-2)^2}{4\pi  C_1(R)}.
\end{align*}
By choosing $\alpha=-\frac{d-2}{4 \pi  C_1(R)}$, we obtain 
	$$\frac{-\Delta \phi}{V \phi}\geq \frac{(d-2)^2}{4\pi  C_1(R)}.$$

In view of \eqref{supersol} and \eqref{neww} we obtain the conclusion of Theorem \ref{th1}.

\begin{obs}\label{rem1}
The constant $C_1(R)$ tends to $1/\pi$ as $R\to 0$, so that the constant in the right hand side of the Hardy inequality 
(\ref{Hardy}) has the following limit as $R\to 0$: 
$$\frac{(d-2)^2}{4\pi  R\sqrt{d} \coth(\pi R \sqrt{d})}\to \frac{(d-2)^2}{4\pi  \frac{1}{\pi}}=\frac{(d-2)^2}{4}.$$
Also, as $R\to 0$, the optimal choice of $\alpha$ becomes $\alpha=-(d-2)/4$. 

This limiting process as $R\to 0$ in the integration set $\mathcal{C}_R$ yields precisely the axis of the cylinder. Let see what happens with the function on the right hand side of identity  (\ref{raport}) in the poles of the potential which are located on the axis $x_1=\cdots=x_d=t$ at $t=k/d$. On the axis, $\rho=0$ and in the poles, $a=k$, so that as one approaches one pole, the right hand side of (\ref{raport}) tends to 
$$-2 \alpha (d-2) - 4\alpha^2 \pi  \rho \frac{e^{2\pi \rho}+ e^{-2\pi \rho}+2\cos (2 \pi a)}{e^{2\pi \rho}- e^{-2\pi \rho}}
\to -2 \alpha (d-2)-4\alpha^2.
$$
For $\alpha=-(d-2)/4$, this limits attains its maximum, which equals precisely to $(d-2)^2/4$. 	
\end{obs}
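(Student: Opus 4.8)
The plan is to verify the elementary limit assertions of the remark in turn; each reduces to a standard asymptotic of $\coth$ near the origin, followed by one short optimization.

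First I would compute $\lim_{R\searrow0}C_1(R)$. Writing $s:=R\sqrt d$ we have $C_1(R)=s\coth(\pi s)$, and since $y\coth y\to1$ as $y\to0$ (indeed $\coth y=1/y+y/3+O(y^3)$), setting $y=\pi s$ gives $s\coth(\pi s)=\frac1\pi+O(s^2)\to\frac1\pi$. The next two assertions are then immediate substitutions: inserting $C_1(R)\to1/\pi$ into the Hardy constant $\frac{(d-2)^2}{4\pi C_1(R)}$ of \eqref{Hardy} yields $\frac{(d-2)^2}{4\pi\cdot(1/\pi)}=\frac{(d-2)^2}{4}$, and the optimal exponent $\alpha=-\frac{d-2}{4\pi C_1(R)}$ tends to $-\frac{d-2}{4}$.

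For the geometric remark I would use the earlier characterization $\mathcal{C}_R=\{\rho\le R\sqrt d\}$: as $R\searrow0$ the admissible region collapses to $\{\rho=0\}$, which is exactly the axis $x_1=\cdots=x_d$. The one genuinely computational point is the pointwise limit of the right-hand side of \eqref{raport} as $x$ tends to a pole $A_k$ along the axis, where one must let $\rho\to0$ and $a\to k\in\zz$ simultaneously. Then $\cos(2\pi a)\to1$, so the numerator $e^{2\pi\rho}+e^{-2\pi\rho}+2\cos(2\pi a)\to4$, while the denominator $e^{2\pi\rho}-e^{-2\pi\rho}=2\sinh(2\pi\rho)=4\pi\rho+O(\rho^3)$; hence the indeterminate product resolves as $\rho\cdot\frac{4}{4\pi\rho+O(\rho^3)}\to\frac1\pi$, and the whole expression in \eqref{raport} tends to $-2\alpha(d-2)-4\alpha^2$. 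The only care needed is to confirm that this double limit is well defined, i.e. that the $O(\rho^3)$ correction and the approach $a\to k$ do not interfere; the explicit expansion above makes this transparent, and this is the main (mild) obstacle.

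Finally I would maximize the resulting quadratic $q(\alpha):=-2(d-2)\alpha-4\alpha^2$ over $\alpha\in\rr$: from $q'(\alpha)=-2(d-2)-8\alpha=0$ the maximizer is $\alpha=-\frac{d-2}{4}$, in agreement with the limiting value of $\alpha$ found above, and $q\!\left(-\tfrac{d-2}{4}\right)=\frac{(d-2)^2}{2}-\frac{(d-2)^2}{4}=\frac{(d-2)^2}{4}$, which completes the verification of all the claims in the remark.
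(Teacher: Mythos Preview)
Your proposal is correct and follows exactly the line implicit in the paper: the remark has no separate proof there, and the claims reduce to the same elementary facts you use---the expansion $\coth y=1/y+O(y)$ to get $C_1(R)=R\sqrt d\,\coth(\pi R\sqrt d)\to1/\pi$, direct substitution for the Hardy constant and for $\alpha$, the characterization $\mathcal{C}_R=\{\rho\le R\sqrt d\}$, the $\sinh$ expansion for the pole limit, and the one-variable quadratic optimization in $\alpha$. There is nothing to add.
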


\proof[Proof of Theorem \ref{th2}] The first inequality in \eqref{bounds} is a consequence of Theorem \ref{th1}, whereas the second one is a direct consequence of \eqref{local.revers.int} in Lemma \ref{lema1.int}  below.  The asymptotic formula \eqref{asympt} is a consequence of \eqref{bounds} and Remark \ref{rem1}. 
\begin{flushright} 
	$\square$ \hfill
\end{flushright}

\begin{lema}[local results]\label{lema1.int}
	For any $\eps>0$ small enough,  there exists $U_{\eps}$ a neighborhood  of $\cup_{i=1}^{\infty} \{a_i\}$ in $\mathcal{C}_R$ such that:
	\begin{enumerate}[(i).]
		\item\label{1.int} For any $u\in C_{0}^{\infty}(U_\eps)$ it holds
		\be\label{local.hardy.int}
		\int_{\mathcal{C}_R} |\n u|^2\dx> \left(\frac{(d-2)^2}{4}-\eps \right)\int_{\mathcal{C}_R} V u^2\dx.\ee
		\item\label{2.int} There exists $u_\eps\in C_{0}^{\infty}(U_\eps)$ satisfying
		\be\label{local.revers.int}
		\int_{\mathcal{C}_R} |\n u_\eps|^2 \dx < \left(\frac{(d-2)^2}{4}+\eps\right)\int_{\mathcal{C}_R}  V u_\eps^2 \dx.
		\ee
	\end{enumerate}
\end{lema}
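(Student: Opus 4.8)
The plan is to prove Lemma \ref{lema1.int} by localizing near a single pole and reducing to the classical one-singularity Hardy inequality in $\rr^d$, exploiting the fact that near a pole $A_i$ the potential $V$ behaves like $1/|x-a_i|^2$ plus a bounded remainder. Concretely, write $V(x)=\frac{1}{|x-a_i|^2}+W_i(x)$ where $W_i=\sum_{k\neq i}\frac{1}{|x-a_k|^2}$ is smooth and bounded in a fixed ball $B_{r_0}(a_i)$; by Proposition \ref{comp_potential} one even has the explicit formula, and $W_i$ inherits a uniform bound $\|W_i\|_{L^\infty(B_{r_0}(a_i))}\le M$ independent of $i$ (by periodicity of the configuration $\{a_k\}$ along the axis). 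Since there are infinitely many poles, I would fix any one of them, say $A_1$; all the estimates are purely local on a small ball around it.

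For part \ref{1.int}, given $\eps>0$ choose $\delta=\delta(\eps)>0$ small enough that on $B_\delta(a_1)$ one has $\frac{u^2}{|x-a_1|^2}\ge \frac{V u^2}{1+M\delta^2}$ — more precisely, using $V\le \frac{1}{|x-a_1|^2}(1+M|x-a_1|^2)\le \frac{1}{|x-a_1|^2}(1+M\delta^2)$ on $B_\delta(a_1)$, combined with the optimal Hardy inequality \eqref{HardyIneq} applied with singularity at $a_1$ on all of $\rr^d$ (valid for $u\in C_0^\infty(\rr^d\setminus\{a_1\})$, in particular for $u\in C_0^\infty(B_\delta(a_1)\setminus\{a_1\})$), gives
\[
\int_{\mathcal{C}_R}|\n u|^2\dx=\int_{B_\delta(a_1)}|\n u|^2\dx\ge \frac{(d-2)^2}{4}\int_{B_\delta(a_1)}\frac{u^2}{|x-a_1|^2}\dx\ge \frac{(d-2)^2}{4(1+M\delta^2)}\int_{\mathcal{C}_R}Vu^2\dx.
\]
Then pick $\delta$ so that $\frac{(d-2)^2}{4(1+M\delta^2)}>\frac{(d-2)^2}{4}-\eps$, and set $U_\eps:=\bigcup_i B_{\delta}(a_i)$; any $u\in C_0^\infty(U_\eps)$ has support in finitely many of these balls and the estimate applies ball-by-ball (all constants uniform), then sums up. The strict inequality is automatic because Hardy's constant is not attained.

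For part \ref{2.int}, I would build a near-optimizer for the classical Hardy inequality localized at one pole and check that the remainder term $W_i$ only helps the reverse estimate. Take the standard truncated Hardy almost-minimizer: for $a_1$ fixed and small parameters, let $u_\eps(x)=\eta(x)\,|x-a_1|^{-(d-2)/2+\tau}$ with $\eta$ a cutoff supported in $B_\delta(a_1)$, equal to $1$ on $B_{\delta/2}(a_1)$, and $\tau>0$ tiny; it is classical that $\frac{\int|\n u_\eps|^2}{\int |x-a_1|^{-2}u_\eps^2}\to \frac{(d-2)^2}{4}$ as $\tau,\delta\to 0$. Since on $B_\delta(a_1)$ we have $V\ge \frac{1}{|x-a_1|^2}$ (all other terms are nonnegative), the denominator $\int_{\mathcal{C}_R}Vu_\eps^2$ is at least $\int |x-a_1|^{-2}u_\eps^2$, hence
\[
\frac{\int_{\mathcal{C}_R}|\n u_\eps|^2\dx}{\int_{\mathcal{C}_R}Vu_\eps^2\dx}\le \frac{\int |x-a_1|^{-2}|\n u_\eps|^2\dx \big/ \int|x-a_1|^{-2}u_\eps^2}{1}\xrightarrow[]{}\frac{(d-2)^2}{4},
\]
so for suitable choice of $\tau,\delta$ this ratio drops below $\frac{(d-2)^2}{4}+\eps$, and $U_\eps$ can be taken as above. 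The main obstacle — really the only subtle point — is the uniformity in $i$: one must confirm that the bound $M$ on $W_i$ and the admissible radius $r_0$ can be chosen independent of the pole, which follows from the translation invariance of the lattice $\{k\ov h\}_{k\in\zz}$ along the axis (equivalently, from boundedness of the explicit formula in Proposition \ref{comp_potential} away from the poles, restricted to $\mathcal{C}_R$); once that is in hand, everything reduces to the one-pole classical Hardy inequality, and both parts follow by the elementary comparisons above.
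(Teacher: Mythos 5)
Your proposal follows essentially the same route as the paper: you split $V$ near each pole into $|x-a_i|^{-2}$ plus a remainder that is bounded uniformly in $i$ (the paper quantifies this as $0<V|x-a_k|^2-1\le \tfrac{4\pi^2}{3h^2}|x-a_k|^2$ on $B_{h/2}(a_k)$), apply the classical one-pole Hardy inequality ball by ball on disjoint balls for part (i), and use a near-optimizer of the classical inequality at a single pole together with $V\ge |x-a_1|^{-2}$ for part (ii). The only cosmetic differences are that the paper invokes optimality of $(d-2)^2/4$ abstractly rather than writing the profile $|x-a_1|^{-(d-2)/2+\tau}$ explicitly (which, as written, is not in $C_0^\infty$ at the pole and would need a further truncation), and your middle display in part (ii) has a stray factor $|x-a_1|^{-2}$ in the numerator; neither affects the argument.
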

\begin{proof}
The proof follows similar (but in a simplified form) as some results in  \cite[Lemma 3.2 and Theorem 1.2]{Cristi_CCM},  combining the local behavior of the potential $V$ near each pole $a_i$ and the classical Hardy inequality  with an inverse square potential applied at each pole $a_i$. For the sake of clarity, let us give few details. 
	First, let us observe that for any fixed $k_0\in  \zz$, we have that 
	$$|x-a_{k}|\geq  \frac{|k-k_0|h}{2}, \quad \forall x\in B_{\frac{|k-k_0|h}{2}}(a_{k_0}), \quad \forall k \in \zz.$$
	Then, for any $x\in B_{h/2}(a_{k_0})$, we obtain 
	\begin{align*}
	\sum_{k=-\infty, k\neq k_0}^{\infty} \frac{1}{|x-a_k|^2} &\leq \sum_{k=1, k\neq k_0}^{\infty} \frac{8}{|k-k_0|^2 h^2}=\frac{8}{h^2} \sum_{k'=1}^{\infty} \frac{1}{k'^2}= \frac{4\pi^2}{3h^2}.
	\end{align*}
	Since
	\begin{align*}
	 V(x)|x-a_{k_0}|^2 &= 1 + |x-a_{k_0}|^2 \sum_{k=-\infty, k\neq k_0}^{\infty}  \frac{1}{|x-a_k|^2} 
	\end{align*}
	we get 
	\begin{equation}\label{asympt_potential}
	0< V(x)|x-a_k|^2-1\leq \frac{4\pi^2}{3h^2} |x-a_k|^2, \quad \forall k\in \zz, \quad \forall x\in B_{h/2}(a_k).   
	\end{equation} 
%
In consequence,
	\be\label{asympt.loc.int}
	\lim_{x\to a_k}V(x)|x-a_k|^2=1, \qquad k\in  \zz. 
	\ee
	For any $r>0$ small enough ($r<\min\{h/2, R\}$), it follows 
	\begin{equation}\label{cond_r}
	\cup_{k=1}^{\infty} B_r(a_k)\subset \mathcal{C}_R \quad  \textrm{ and } \quad  B_r(a_i)\cap B_r(a_j)=\emptyset,  \  \forall i\neq j.  
	\end{equation}
	In view of the classical HI, we get    \be\label{loc.bound.fall.int}
	\int_{B_r(a_k)} |\n u|^2\dx \geq \frac{(d-2)^2}{4}\int_{B_r(a_k)} \frac{u^2}{|x-a_k|^2}\dx, \quad \forall u\in C_{0}^{\infty}(B_{r}(a_k)), \quad \forall k.
	\ee
	
	Now, let $\eps>0$  and set $\delta_\eps:=4\eps/\left((d-2)^2-4\eps\right)$ so that $\delta_{\eps}\in (0, \pi^2/4)$ for $\eps$ small enough. Let us also set $r_\eps:=h\sqrt{\delta_\eps}/\pi$.  Then $r_\eps<h/2$ and, in view of \eqref{asympt_potential}, it holds 
	\be\label{limit.int}
	1 <  V(x)|x-a_k|^2 < 1+\delta_\eps, \quad \forall x\in B_{r_\eps}(a_k), \quad \forall k.
	\ee
We may assume  $\eps$ small enough such that $r_\eps$ satisfies \eqref{cond_r} and  consider then $U_\eps:=\cup_{k=1}^{\infty}  B_{r_\eps}(a_k)$.  Let be $u\in C_{0}^{\infty}(U_\eps)$ and denote  $u_k:=u_{|B_{r_\eps}(a_k)}$. Since $u$ is supported in disjoint balls that shrink around the singular poles, applying  \eqref{loc.bound.fall.int} and \eqref{limit.int} for each $u_k$, we get 
	\begin{align}\label{ineq2}
	\int_{U_\eps} |\n u|^2 \dx &= \sum_{k=1}^{\infty} \int_{B_{r_\eps}(a_k)} |\n u_k|^2 \dx \nonumber\\
	& \geq \frac{(d-2)^2}{4}\frac{1}{1+\delta_\eps} \sum_{k=1}^{\infty} \int_{B_{r_\eps}(a_k)} V u_k^2 \dx \nonumber\\
	&  =\frac{(d-2)^2}{4}\frac{1}{1+\delta_\eps} \int_{U_\eps} V u^2 \dx\nonumber\\
	&= \left(\frac{(d-2)^2}{4}-\eps\right) \int_{U_\eps} V u^2 \dx,
	\end{align}
	and the proof of \eqref{local.hardy.int} is finally obtained.
	
	\proof[Proof of \eqref{local.revers.int}] 
	
	
	Due to the optimality of $(d-2)^2/4$ in \eqref{loc.bound.fall.int} for  $k=1$ and taking  $r_{\eps}$ instead of $r$, we get that  there exists $u_{1, \eps}\in C_{0}^{\infty}(B_{r_{\eps}}(a_1))$ such that
	\be\label{ineq.invers.int}
	\int_{B_{r_{\eps}}(a_1)} |\n u_{1,\eps}|^2 \dx \leq \left(\frac{(d-2)^2}{4}+ \eps  \right) \int_{B_{r_{\eps}}(a_1)} \frac{u_{1, \eps}^2}{|x-a_1|^2} \dx.
	\ee
	Then, we consider $u_\eps:= \overline{u}_{1, \eps} $ where $\overline{u}_{1, \eps}$ is the trivial extension of $u_{1, \eps}$ to $U_\eps$.  It follows that $u_\eps$ belongs to $C_{0}^{\infty}(U_\eps)$.
	Therefore, combining \eqref{ineq.invers.int} and the left inequality in \eqref{limit.int} we get \begin{align}
	\int_{\mathcal{C}_R}|\n u_\eps|^2 \dx &= \int_{B_{r_\eps}(a_1)} |\n u_{1, \eps}|^2 \dx\nonumber\\
	& \leq \left(\frac{(d-2)^2}{4}+\eps  \right) \int_{B_{r_\eps}(a_1)} V u_{1, \eps}^{2}\dx\nonumber\\
	&= \left(\frac{(d-2)^2}{4}+\eps\right)\int_{\mathcal{C}_R} V u_\eps^2 \dx.
	\end{align}
	The proof of Lemma \ref{lema1.int} is  finished.
\end{proof}

 \begin{Th}\label{moregen}
	Assume $d\geq 3$. Then we successively obtain that
	\begin{enumerate}[(i).]
		\item \label{item1}  There exists a finite constant  $C=C(h, R)\in \rr$ which  depends on $R$ and  $h$,  so that the inequality
		\be\label{good1}
		C \int_{\mathcal{C}_R} u^2 \dx+ \int_{\mathcal{C}_R} |\n u|^2 \dx \geq \frac{(d-2)^2}{4}  \int_{\mathcal{C}_R} V u^2 \dx,
		\ee
		is verified for any $u \in C_{0}^{\infty}(\mathcal{C}_R)$.
		\item\label{item2} For any $\mu> (d-2)^2/4$  and any constant $\lambda\in \rr$ there exists $u_{\lambda, \mu}\in C_{0}^{\infty}(\mathcal{C}_R)$ such that
		\be\label{good2}
		\lambda \int_{\mathcal{C}_R} u_{\lambda, \mu}^2 \dx+ \int_{\mathcal{C}_R} |\n u_{\lambda, \mu}|^2 \dx <  \mu  \int_{\mathcal{C}_R} V u_{\lambda, \mu}^2 \dx.
		\ee
	\end{enumerate}
\end{Th}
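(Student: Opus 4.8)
The plan is to derive \eqref{good1} by revisiting the supersolution construction of Theorem \ref{th1}, but now \emph{without} trying to absorb the leftover term into the potential: instead we absorb it into the zeroth-order term $C\int u^2$. Recall from \eqref{raport} that with $\phi=\theta^\alpha$ we have
\[
\frac{-\Delta\phi}{V\phi}=-2\alpha(d-2)-4\alpha^2\pi\,\rho\,\frac{e^{2\pi\rho}+e^{-2\pi\rho}+2\cos(2\pi a)}{e^{2\pi\rho}-e^{-2\pi\rho}}.
\]
Choosing the value $\alpha=-(d-2)/4$ (the limiting optimal choice highlighted in Remark \ref{rem1}) makes the first term equal $(d-2)^2/2$, and it remains to control the correction. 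Using Proposition \ref{propC1}, the quantity $\rho\,\frac{e^{2\pi\rho}+e^{-2\pi\rho}+2\cos(2\pi a)}{e^{2\pi\rho}-e^{-2\pi\rho}}$ is bounded above by $C_1(R)=R\sqrt d\coth(\pi R\sqrt d)$ on $\mathcal C_R$, hence $-\Delta\phi/(V\phi)\ge (d-2)^2/2-4\pi\alpha^2 C_1(R)=\tfrac{(d-2)^2}{4}-\tfrac{(d-2)^2}{4}\big(\pi C_1(R)-1\big)/(\pi)\cdot(\text{const})$ — in any case a quantity of the form $\tfrac{(d-2)^2}{4}-K(R)$ for an explicit finite $K=K(h,R)$. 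Equivalently $-\Delta\phi\ge \big(\tfrac{(d-2)^2}{4}-K\big)V\phi+?$; the cleanest route is to write $-\Delta\phi = \tfrac{(d-2)^2}{4}V\phi - g(x)V\phi$ with $0\le g(x)\le K$ on $\mathcal C_R$, so that $-\Delta\phi/\phi = \tfrac{(d-2)^2}{4}V - gV$. Then applying the Allegretto--Huang identity \eqref{gen} with this $\phi$ gives, for all $u\in C_0^\infty(\mathcal C_R\setminus\cup_i\{A_i\})$,
\[
\int_{\mathcal C_R}|\n u|^2\dx + \int_{\mathcal C_R}\Big(\tfrac{(d-2)^2}{4}V - gV\Big)u^2\dx = \int_{\mathcal C_R}\phi^2|\n(u\phi^{-1})|^2\dx\ \ge 0,
\]
which rearranges to $\int|\n u|^2 + \int gV u^2 \ge \tfrac{(d-2)^2}{4}\int V u^2$. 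Since $0\le g\le K$ and $V$ is locally integrable but \emph{not} bounded near the poles, one cannot simply bound $gVu^2$ by $C u^2$; instead one must use that $g(x)$ vanishes to second order at each pole. This is exactly what Remark \ref{rem1} and estimate \eqref{asympt_potential} encode: near $a_k$ one has $V(x)\sim |x-a_k|^{-2}$ and the correction factor $g$, being $4\pi\alpha^2\rho\,(\dots)$, behaves like $O(\rho^2)=O(|x-a_k|^2)$, so $g(x)V(x)$ stays bounded on all of $\mathcal C_R$ by a constant $C=C(h,R)$. That gives $gVu^2\le C u^2$ and hence \eqref{good1}. The main obstacle is making the bound $g(x)V(x)\le C$ rigorous and uniform over the whole (unbounded in one direction, but periodic) cylinder — one argues by periodicity of $V$ under $x\mapsto x+\bar h$ so it suffices to bound $gV$ on one period cell intersected with $\mathcal C_R$, where it is continuous except possibly at the single pole inside, and there the product has a finite limit by \eqref{asympt.loc.int} combined with $g=O(\rho^2)$.

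For part \eqref{item2}, the plan is a concentration argument at a single pole, parallel to the proof of \eqref{local.revers.int} in Lemma \ref{lema1.int}. Fix $\mu>(d-2)^2/4$ and $\lambda\in\rr$. Pick $\eps>0$ with $(d-2)^2/4+2\eps<\mu$, and choose $r$ small enough that $B_r(a_1)\subset\mathcal C_R$ and, by \eqref{limit.int} (with $\delta$ chosen so that $(1+\delta)((d-2)^2/4+\eps)\le\mu$), that $V(x)\ge ((d-2)^2/4+\eps)/\big((d-2)^2/4+\eps\big)\cdot|x-a_1|^{-2}$ — more simply, that $V(x)|x-a_1|^2\ge 1$ on $B_r(a_1)$, which holds on all of $B_{h/2}(a_1)$ by \eqref{asympt_potential}. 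By optimality of $(d-2)^2/4$ in the classical Hardy inequality \eqref{loc.bound.fall.int}, there is $u_1\in C_0^\infty(B_r(a_1))$ with $\int_{B_r(a_1)}|\n u_1|^2\dx\le \big(\tfrac{(d-2)^2}{4}+\eps\big)\int_{B_r(a_1)}\frac{u_1^2}{|x-a_1|^2}\dx$. Extending $u_1$ by zero to $u_{\lambda,\mu}$ on $\mathcal C_R$, we get
\[
\int_{\mathcal C_R}|\n u_{\lambda,\mu}|^2\dx \le \Big(\tfrac{(d-2)^2}{4}+\eps\Big)\int_{\mathcal C_R}\frac{u_{\lambda,\mu}^2}{|x-a_1|^2}\dx \le \Big(\tfrac{(d-2)^2}{4}+\eps\Big)\int_{\mathcal C_R}V u_{\lambda,\mu}^2\dx.
\]
To finish we must beat the extra term $\lambda\int u_{\lambda,\mu}^2$. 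If $\lambda\le 0$ this is immediate. If $\lambda>0$, we rescale: replace $u_1$ by $u_1^{(s)}(x):=u_1\big(a_1+(x-a_1)/s\big)$ concentrating on $B_{r/s}(a_1)$ as $s\to\infty$. Both $\int|\n u_1^{(s)}|^2$ and $\int V (u_1^{(s)})^2$ scale like $s^{d-2}$ (the latter using $V\sim|x-a_1|^{-2}$ near the pole, which one makes precise via \eqref{asympt_potential}), while $\int (u_1^{(s)})^2$ scales like $s^{d-4}$ — here $d\ge 3$; for $d\ge 5$ this is $o(s^{d-2})$ outright, and for $d=3,4$ one instead chooses the scaling parameter $s$ large and absorbs the lower-order term using that the ratio $\int(u_1^{(s)})^2/\int V(u_1^{(s)})^2$ still tends to $0$ because of the $|x-a_1|^{-2}$ weight in the denominator. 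Thus for $s$ large the strict inequality $\lambda\int(u_1^{(s)})^2 + \int|\n u_1^{(s)}|^2 < \mu\int V (u_1^{(s)})^2$ holds, giving \eqref{good2}. The delicate point here is the $d=3,4$ case where $\int u^2$ is not obviously lower order than $\int Vu^2$; the resolution is that both the gradient and the potential integrals are dominated by the singular behaviour at $a_1$ while $\int u^2$ is not, so concentration still wins — this is where \eqref{asympt_potential} (which pins $V|x-a_1|^2$ between $1$ and $1+O(|x-a_1|^2)$) is essential.

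Finally, I would remark that \eqref{good1} together with \eqref{good2} shows that $(d-2)^2/4$ is precisely the \emph{critical} coupling constant for the shifted inequality $\lambda\|u\|_2^2+\|\n u\|_2^2\ge\mu\int Vu^2$ to hold on $\mathcal C_R$, complementing Theorem \ref{th2}: the Hardy constant $\mu(\mathcal C_R)$ itself is strictly below $(d-2)^2/4$ for $R>0$ (by \eqref{bounds}), but adding an arbitrarily large multiple of $\|u\|_2^2$ on the left recovers exactly the classical constant and no more.
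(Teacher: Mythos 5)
Your part (i) takes a genuinely different route from the paper's. The paper localizes with a cut-off $\xi$ supported in $\cup_k B_{h/4}(a_k)$ and equal to $1$ on $\cup_k B_{h/8}(a_k)$, applies the classical Hardy inequality ball by ball to $\xi u$, and controls the two error terms (the commutator term $\int(\xi\Delta\xi)u^2$ and the tails $\sum_k\int_{|x-a_k|>h/8}u^2|x-a_k|^{-2}$, estimated as in \eqref{bb3}) by $C\int u^2$. You instead reuse the Allegretto--Huang supersolution $\phi=\theta^{-(d-2)/4}$ of Theorem \ref{th1} and absorb the defect $\int gVu^2$ into the zeroth-order term via a pointwise bound $gV\le C(h,R)$. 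Both are workable: yours is more in the spirit of the rest of the paper and would give an explicit constant, while the paper's cut-off argument needs no fine pointwise analysis of $V$ beyond the crude tail sums. For (ii) you and the paper do essentially the same thing --- concentrate a near-optimizer of the classical Hardy quotient at a single pole and make $\int u^2$ negligible against $\int Vu^2$ --- the paper by shrinking $r_\eps$ until $\inf_{U_\eps}V\ge\lambda/(\mu-\mu')$, you by explicit rescaling; these are equivalent, and your worry about $d=3,4$ is unfounded, since the ratio $\int (u^{(s)})^2\dx\,/\int V(u^{(s)})^2\dx$ decays like $s^{-2}$ in every dimension $d\ge 3$.

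The justification of the key bound in (i), as written, is however incorrect and must be repaired. With your (correct) normalization $-\Delta\phi=\tfrac{(d-2)^2}{4}V\phi-gV\phi$ and $\alpha=-(d-2)/4$, identity \eqref{raport} gives $g=\tfrac{(d-2)^2}{4}\left(\pi f(x)-1\right)$, where $f(x)=\rho\,\tfrac{e^{2\pi\rho}+e^{-2\pi\rho}+2\cos(2\pi a)}{e^{2\pi\rho}-e^{-2\pi\rho}}$. Thus $g$ is \emph{not} the term $4\pi\alpha^2\rho(\cdots)$ itself, which tends to $4\alpha^2=(d-2)^2/4\neq 0$ at the poles; if $g$ were that term, $gV$ would blow up like $|x-a_k|^{-2}$ and the absorption into $C\int u^2$ would fail. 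Nor is $g$ of order $O(\rho^2)$: on the axis between two poles $\rho=0$ while $\pi f-1\to\tfrac{\cos(2\pi a)-1}{2}\neq 0$. What is true, and what you actually need, is the two-variable expansion $\pi f-1=\tfrac{\pi^2\rho^2}{3}-\pi^2(a-k)^2+(\text{fourth-order terms})$ near $a_k$, whence $|g|\le C\left(\rho^2+(a-k)^2\right)=Cd\,|x-a_k|^2$ there; combined with \eqref{asympt.loc.int} this bounds $gV$ near each pole, while away from the poles $V$ itself is bounded and periodicity gives uniformity over the infinite cylinder. Two further points to record: \eqref{gen} is stated for $u\in C_0^\infty(\mathcal{C}_R\setminus\cup_i\{A_i\})$ whereas \eqref{good1} is asserted for all $u\in C_0^\infty(\mathcal{C}_R)$, so a density argument (points have zero $H^1$-capacity for $d\ge3$) is needed to pass from one class to the other; and in (ii) your map $x\mapsto a_1+(x-a_1)/s$ dilates rather than concentrates the support --- you want $u_1(a_1+s(x-a_1))$.
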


\proof[Proof of Theorem \ref{moregen}]

We combine a cut-off argument by localizing the singularities, the standard Hardy inequality and Lemma \ref{lema1.int}.

\proof[ Proof of item \eqref{item1}] 
Let  $U:=\cup_{k=1}^{\infty}B_{h/4}(a_k)$ be a neighborhood of $\cup_{i=1}^{n}\{a_i\}$ in $\mathcal{C}_R$ constituted in a union of disjoint balls, i.e.  $B_{h/4}(a_i)\cap B_{h/4}(a_j)=\emptyset $ for any $i\neq j$. Let $u\in C_0^\infty(\mathcal{C}_R)$ and consider $\xi$ be a $C^2(\mathcal{C}_R)$ cut-off function supported in $U$, so that $0\leq \xi\leq 1$ and  $\xi \equiv 1$ in  $\cup_{k=1}^{\infty}B_{h/8}(a_k)$. Such a construction it is possible by taking for instance $\xi_k(x):=\xi_{| B_{h/4}(a_k)}(x):=g(|x-a_k|)$, where $g:[0, \infty)$ is a $C^2$ function such that $0\leq g\leq 1$, $g\equiv 1$ on $[0, h/8]$ and $g\equiv 0$ on $[h/4, \infty)$.  By integration by parts,we have
\begin{align}\label{bb1}
\int_{\mathcal{C}_R} |\n (\xi u)|^2 \dx &=\int_{\mathcal{C}_R} \xi^2  |\n u|^2 \dx -\int_{\mathcal{C}_R} (\xi \Delta \xi ) u^2 \dx \nonumber\\
&\leq \int_{\mathcal{C}_R} |\n u|^2 \dx + C \int_{\mathcal{C}_R} u^2 \dx,
\end{align}
for some constant $C>0$ depending on $g$. It is worth mentioning here that $\Delta \xi\in L^\infty(\mathcal{C}_R)$ in view of the definition of its radial profile shifted from a singular pole to the others. 
  
  Let $v_k:=\xi u_{|B_{h/4}(a_k)}$ which belongs to $C_0^\infty(B_{h/4}(a_k))$. 
  
Then, from  the standard Hardy inequality we get
\begin{align}\label{bb2}
\int_{\mathcal{C}_R} |\n (\xi u)|^2 \dx& = \sum_{k=1}^\infty \int_{B_{h/4}(a_k)} |\nabla v_k|^2 \dx \geq \frac{(d-2)^2}{4} \sum_{k=1}^\infty \int_{B_{h/4}(a_k)} \frac{v_k^2}{|x-a_k|^2} \dx \nonumber\\
& \geq \frac{(d-2)^2}{4} \sum_{k=1}^\infty \int_{B_{h/8}(a_k)} \frac{u^2}{|x-a_k|^2} \dx \nonumber\\
& =  \frac{(d-2)^2}{4} \int_{\mathcal{C}_R} V u^2 \dx -  \frac{(d-2)^2}{4} \sum_{k=1}^\infty  \int_{|x-a_k|> \frac  h 8} \frac{u^2}{|x-a_k|^2}  \dx.
\end{align}
For the second term above we successively have the estimates 
\begin{align}\label{bb3}
\sum_{k=1}^\infty  \int_{|x-a_k|> \frac h 8} \frac{u^2}{|x-a_k|^2}  \dx & :=\sum_{k=1}^{2[R/h]}\ldots + \sum_{k> 2[R/h]}\ldots \nonumber\\
& \leq \sum_{k=1}^{2[R/h]} \int_{|x-a_k|> \frac h 8} \frac{64}{h^2} u^2 \dx + \sum_{k>2[R/h]} \int_{|x-a_k|> \frac h 8} \frac{u^2}{k^2 h^2 -R^2} \dx\nonumber\\
& \leq  \frac{128}{h^2}[R/h] \int_{\mathcal{C}_R} u^2 \dx + \sum_{k> 2[R/h]} \frac{2}{k^2 h^2} \int_{\mathcal{C}_R} u^2 \dx\nonumber\\
& \leq \left(\frac{128}{h^2}[R/h] + \frac{ \pi^2 }{ 3h^2}\right) \int_{\mathcal{C}_R} u^2 \dx. 
\end{align}
Combining \eqref{bb1}-\eqref{bb3} we finally conclude the proof of item (i).

\proof[ Proof of item \eqref{item2}]   Let $\lambda\in \rr$ and $\mu> (d-2)^2/4$ be fixed. Let us also consider $\mu'$ and $\eps>0$ such that  $\mu> \mu' > (d-2)^2/4$ and  $(d-2)^2/4+\eps < \mu'$.

In view of \eqref{local.revers.int} in  Lemma \ref{lema1.int}, there exists a neighborhood $U_{\eps}:=\cup_{k=1}^{\infty}B_{r_\eps}(a_k)$ of $\cup_{k=1}^{n}\{a_k\}$ in $\mathcal{C}_R$ and $u_\eps\in C_0^\infty(U_{\eps})$ satisfying
\begin{align}\label{up.hardy}
\int_{U_{\eps}} |\n u_\eps|^2 \dx &< \left(\frac{(d-2)^2}{4}+\eps\right)\int_{U_{\eps}}  V u_\eps^2\dx\nonumber\\
& < \mu' \int_{U_{\eps}} V u_\eps^2 \dx.
\end{align}
It follows that
$$(\mu-\mu')\int_{U_{\eps}} V u_\eps^2 \dx +\int_{U_{\eps}} |\n u_\eps|^2\dx < \mu \int_{U_{\eps}} V u_\eps^2 \dx.$$
Finally, it is enough to consider $r_\eps$ small enough such that
$$\frac{\lambda}{\mu-\mu'}\leq \inf_{U_{\eps}} V,$$
to conclude that
$$\lambda \int_{U_{\eps}}u_\eps^2 \dx  +\int_{U_{\eps}} |\n u_\eps|^2\dx < \mu \int_{U_{\eps}} V u_\eps^2 \dx. $$

\section{Proof of Lemma \ref{lemma}}\label{sec4} Within this proof, we consider
$$\rho=\frac{\sqrt{\sum\limits_{j,k=1}^d(x_j-x_k)^2}}{dh\sqrt{2}}   \mbox{ and }a=\frac{\sum\limits_{j=1}^d x_j}{dh}.$$

Note that these identities can be written as
\begin{equation}\label{rhoa}2d^2h^2\rho^2=\sum\limits_{j,k=1}^d(x_j-x_k)^2 \mbox{ and } \sum\limits_{j=1}^d x_j=dha.\end{equation}

In the sum of squares $\sum\limits_{j,k=1}^d(x_j-x_k)^2$, each square $(x_m-x_n)^2$ is contained twice (once for $(j,k)=(m,n)$ and once for $(j,k)=(n,m)$). In this way, $$2d^2h^2\rho^2=2\sum\limits_{n=1}^d(x_m-x_n)^2+R_m,$$ where $R_m$ retains all squares that do not contain $x_m$. By taking partial derivative with respect to $x_m$ of this last identity, we obtain
$$4d^2h^2\rho\frac{\partial\rho}{\partial x_m}=4\sum\limits_{n=1}^d(x_m-x_n)=4dx_m-4\sum\limits_{n=1}^dx_n=4dx_m-4dha.$$
Therefore,
$$\frac{\partial\rho}{\partial x_m}=\frac{dx_m-dha}{d^2h^2\rho} \mbox{ and } \nabla\rho=\frac{dx-dha\overline{1}}{d^2h^2\rho}.$$
From this expression of $\nabla\rho$ and $x\cdot\overline{1}=dha$ obtained from (\ref{rhoa}), the norm $|\nabla\rho|$ can be easily computed as
\begin{equation}\label{normnablarho}|\nabla\rho|^2=\frac{d^2|x|^2-2d^2hax\cdot\overline{1}+d^2h^2a^2|\overline{1}|^2}{d^4h^4\rho^2}=\frac{d^2|x|^2-d^3h^2a^2}{d^4h^4\rho^2}.\end{equation}
Another identity that can be deduced from the expression of $\rho$ is
$$2d^2h^2\rho^2=\sum\limits_{j,k=1}^d(x_j-x_k)^2=\sum\limits_{j,k=1}^d(x_j^2-2x_jx_k-x_k^2)=d|x|^2-2\sum\limits_{j=1}^d x_j\sum\limits_{k=1}^d x_k+d|x|^2
=2d|x|^2-2d^2h^2a^2.$$
Consequently,

\begin{equation}d^2|x|^2-d^3h^2a^2=d^3h^2\rho^2.\label{xrhoa}\end{equation} This identity simplifies the expression of $|\nabla\rho|^2$ in (\ref{normnablarho}) as
$$|\nabla\rho|^2=\frac{d^3h^2\rho^2}{d^4h^4\rho^2}=\frac{d}{d^2h^2}.$$

NOw, let us compute the Laplacian:
\begin{eqnarray}\triangle\rho&=&\sum\limits_{m=1}^d\frac{\partial}{\partial x_m}\left(\frac{dx_m-dha}{d^2h^2\rho}\right)
=\frac{1}{d^2h^2}\sum\limits_{m=1}^d\frac{(d-1)\rho-\frac{(dx_m-dha)^2}{d^2h^2\rho}}{\rho^2}
\nonumber\\&=&\frac{1}{d^4h^4\rho^3}
\sum\limits_{m=1}^d\left[(d-1)d^2h^2\rho^2-(dx_m-dha)^2\right]\nonumber\\&=&\frac{1}{d^4h^4\rho^3}\big[(d-1)d^3h^2\rho^2-(d^2|x|^2-d^3h^2a^2)\big].\nonumber\end{eqnarray}
Using (\ref{xrhoa}), we simplify the above expression as
$$\triangle\rho=\frac{1}{d^4h^4\rho^3}\big[(d-1)d^3h^2\rho^2-d^3h^2\rho^2\big]=\frac{(d-2)d^3h^2\rho^2}{d^4h^4\rho^3}=\frac{(d-2)d}{d^2h^2\rho}.$$

Similarly,
\begin{eqnarray}\textrm{div}\Big(\frac{\nabla\rho}{\rho}\Big)&=&
\sum\limits_{m=1}^d\frac{\partial}{\partial x_m}\left(\frac{dx_m-dha}{d^2h^2\rho^2}\right)
=\sum\limits_{m=1}^d\frac{(d-1)d^2h^2\rho^2-2(dx_m-dha)^2}{d^4h^4\rho^4}
\nonumber\\&=&\frac{(d-1)d^3h^2\rho^2-2(d^2|x|^2-d^3h^2a^2)}{d^4h^4\rho^4}=
\frac{(d-1)d^3h^2\rho^2-2d^3h^2\rho^2}{d^4h^4\rho^4}\nonumber\\&=&
\frac{(d-3)d^3h^2\rho^2}{d^4h^4\rho^4}=\frac{(d-3)d}{d^2h^2\rho^2}.\nonumber\end{eqnarray}

It is easy to see that, in the general case, $\nabla a=\left(\frac{1}{dh},\cdots,\frac{1}{dh}\right)=\frac{1}{dh}\overline{1}$,
$\triangle a=0$ and
$$\nabla\rho\cdot\nabla a=\frac{dx-dha\overline{1}}{d^2h^2\rho}\cdot\frac{\overline{1}}{dh}=\frac{dx\cdot\overline{1}-dha\overline{1}\cdot\overline{1}}{d^3h^3\rho}=\frac{d^2ha-d^2ha}{d^3h^3\rho}=0.$$

The proof is completed now by taking into account that $dh=1$. 
\hfill
$\square$

{\bf Acknowledgements}. The authors were partially founded by a grant of Ministry of Research and Innovation, CNCS-UEFISCDI, project number PN-III-P1-1.1-TE-2016-2233, within PNCDI III.


\begin{thebibliography}{99}

\bibitem{Allegretto}  W.  Allegretto and Y. X.  Huang, \textit{A Picone's identity for the p-Laplacian and applications}. Nonlinear Anal. 32 (1998), no. 7, 819–830. 

\bibitem{BDE} R. Bosi, J. Dolbeault, and M. J. Esteban ,\textit{Estimates for the optimal constants in multipolar Hardy inequalities for Schrodinger and Dirac operators}, Commun. Pure Appl. Anal., 7(3) (2008), 533--562.


\bibitem{BV}  H. Brezis and J. L. V\'{a}zquez, \textit{Blow-up solutions of some nonlinear elliptic problems}. Rev. Mat. Univ. Complut. Madrid 10 (1997), no. 2, 443--469. 

\bibitem{CazZua} C. Cazacu and E. Zuazua, \textit{Improved multipolar Hardy inequalities}, Studies in phase space analysis with applications to PDEs, 35–52, Progr. Nonlinear Differential Equations Appl., 84, Birkhäuser/Springer, New York, 2013.    


\bibitem{Cristi_CCM} C. Cazacu, \textit{New estimates for the Hardy constants of multipolar Schr\"{o}dinger operators}, 
	Commun. Contemp. Math. 18 (2016), no. 5, 1550093, 28 pp. 
	
\bibitem{CP}  A. Canale, F. Pappalardo, \textit{Weighted Hardy inequalities and Ornstein-Uhlenbeck type operators perturbed by multipolar inverse square potentials}, J. Math. Anal. Appl. 463 (2018), no. 2, 895--909.  	

\bibitem{CPT}  A. Canale, F. Pappalardo, C. Tarantino, \textit{Weighted multipolar Hardy inequalities and evolution problems with Kolmogorov operators perturbed by singular potentials}, Commun. Pure Appl. Anal. 20 (2021), no. 1, 405--425.

\bibitem{Davies} E. B. Davies, \textit{A review of Hardy inequalities}, Oper. Theory Adv. Appl. 110 (1999), 55--67.

\bibitem{D} T. Duyckaerts, \textit{Inégalités de résolvante pour l'opérateur de Schr\"{o}dinger avec potentiel multipolaire critique. (French) [Resolvent estimates for the Schr\"{o}dinger operator with critical multipolar potential]}, Bull. Soc. Math. France 134 (2006), no. 2, 201--239.




\bibitem{FFT} V. Felli,  A. Ferrero, S. Terracini, \textit{Asymptotic behavior of solutions to Schrödinger equations near an isolated singularity of the electromagnetic potential}, J. Eur. Math. Soc. (JEMS) 13 (2011), no. 1, 119--174. 

\bibitem{Felli1} V. Felli, E. Marchini and S. Terracini, \textit{On Schrödinger operators with multipolar inverse-square potentials}. 
	J. Funct. Anal. 250 (2007), no. 2, 265--316. 
	
\bibitem{FT1} V. Felli, S. Terracini, \textit{Elliptic equations with multi-singular inverse-square potentials and critical nonlinearity}, Comm. Partial Differential Equations 31 (2006), no. 1-3, 469--495
	

\bibitem{HardyNote} G. H. Hardy, \textit{Note on a theorem of Hilbert}, Math. Z. 6 (1920), 314--317.

\bibitem{HardyLittlewoodPolya} G. H. Hardy, J. E. Littlewood and G. Polya, \textit{Inequalities}, Cambridge University Press, 1988.

\bibitem{Kufner} A. Kufner, L. Maligranda and L.-E. Persson, \textit{The Hardy inequality. About its history and some relatedresults}, Vydavatelsky’ Servis, Plzen, 2007. 

\bibitem{Spiegel} S. Lipschutz, J. J. Schiller, D. Spellman, M. R. Spiegel, \textit{Schaum's Outline of Complex Variables}, 2nd ed., McGraw-Hill Inc., 2009.
	 
\end{thebibliography}
\end{document}